\newtheorem{theorem}{Theorem}[section]
\newtheorem{corollary}[theorem]{Corollary}
\newtheorem{proposition}[theorem]{Proposition}
\newtheorem{lemma}[theorem]{Lemma}
\theoremstyle{definition}
\newtheorem{remark}[theorem]{Remark}
\begin{document}

\title{Bounds for the geometric-arithmetic index of unicyclic graphs}
\author{Sunyo Moon\footnotemark[1]\,  
and Seungkook Park\footnotemark[2]} 

\date{}
\renewcommand{\thefootnote}{\fnsymbol{footnote}}
\footnotetext[1]{Research Institute of Natural Sciences, Hanyang University,Seoul, Republic of Korea(symoon89@hanyang.ac.kr)}
\footnotetext[2]{Department of Mathematics and Research Institute of Natural Sciences, Sookmyung Women's University, Seoul, Republic of Korea(skpark@sookmyung.ac.kr)}
\renewcommand{\thefootnote}{\arabic{footnote}}

\maketitle

\begin{abstract}
We present lower and upper bounds for the geometric-arithmetic index of unicyclic graphs and provide extremal graphs for the corresponding bounds.
\end{abstract}

\section{Introduction}

Among the numerous topological descriptors, Randi\'{c} index is the most used topological index for applications in chemistry and pharmacology. In \cite{Vukicevic_D}, Vuki\v{c}evi\'{c} and Furtula introduced a new topological index based on the ratios of geometrical and arithmetical means of endvertex degrees of edges, which is called the {\it geometric-arithmetic index} (GA index) and showed that the prediction power of GA index for physico-chemical properties such as entropy, enthalpy of vaporization, standard enthalpy of vaporization, enthalpy of formation, and acentric factor, is at least for 2.5\% better than that of Randi\'{c} index.  For these reasons GA index has been used in the development of QSPR/QSAR researches.

In recent years, many studies have been done on the GA index \cite{Aouchiche-geo, Aouchiche-adj, Aouchiche-com, Chen, Du,  Gutman, Rodriguez, Vujosevic}. A main topic in the study of GA index is to find bounds of the GA index involving several parameters. An upper bound for the GA index of trees in terms of the order and the domination number was given in \cite{Bermudo}.  Lower and upper bounds for the GA index in terms of minimal degree was presented in \cite{Aouchiche-geo}. An upper bound on the ratio of the GA index and the chromatic number of a graph was provided in \cite{AOUCHICHE2017207}. Lower bounds for the GA index in terms of number of vertices, number of edges, maximal vertex degree, minimal non-pendant vertex degree, number of pendant vertices, and the second Zagreb index was established in \cite{Das}. Vuki\v{c}evi\'{c} and Furtula gave lower and upper bounds of the GA index for trees and chemical trees in \cite{Vukicevic_D}. In this paper, we give lower and upper bounds of the GA index for unicyclic graphs and provide extremal graphs for the corresponding bounds.

\section{Upper bound of the GA index}

In this section, we introduce some definitions and inequalities. We also show that the upper bound for the GA index of the unicyclic graph $G$ with $n$ vertices is $n$ which is obtained by the cycle graph $C_n$.

Let $G=(V(G),E(G))$ be a simple graph with vertex set $V(G)$ and edge set $E(G)$.
The {\it degree} of a vertex $v$ of $G$ is the number of edges incident to $v$, denoted by $d_G(v)$.
The geometric-arithmetic index of $G$ is
\begin{equation*}
  GA(G) =\sum_{uv \in E(G)}\frac{2\sqrt{d_G(u)d_G(v)}}{d_G(u)+d_G(v)}
\end{equation*}
and the {\it arithmetic-geometric index} (AG index) is
\begin{equation*}
    AG(G)=\sum_{uv \in E(G)}\frac{d_G(u)+d_G(v)}{2\sqrt{d_G(u)d_G(v)}}
\end{equation*}
A connected graph $G$ is called a {\it unicyclic graph} if it contains exactly one cycle. In \cite{Vukicevic_Z}, Vuki\'{c}evi\'{c}, Vujo\v{s}evi\'{c}, and Popivoda computed the
lower and upper bounds for the AG index of unicyclic graphs and proved that the bounds are sharp by providing the extremal graphs for the corresponding bounds. Our goal is to find the lower and upper bounds for the GA index of a unicyclic graph $G$ and the extremal graphs for the corresponding bounds, that is, we show that
\begin{equation}\label{eq:main}
 1+\frac{(2n^2+4(\sqrt{2}-1)n-6)\sqrt{n-1}}{n(n+1)} \leq GA(G) \leq n,
\end{equation}
where $n$ is the number of vertices of the graph $G$. The extremal graph for the upper bound is the cycle graph $C_n$  and the extremal graph for the lower bound is the graph obtained by attaching $n-3$ pendant vertices to a vertex of the cycle graph $C_3$. 

We follow the definitions and notations in \cite{Vukicevic_Z}.
A vertex $v$ is called a {\it cycle vertex} if it lies on the cycle of $G$ and an edge $e$ is called a {\it cycle edge} if it is an edge of the cycle of $G$.
A cycle vertex $v$ is called a \textit{local maximum vertex} if the degree of $v$ is greater than or equal to the degree of its neighbor cycle vertices, that is, $d_G(v) \geq \max\{d_G(v_i):i=1,2\}$, where $v_1$, $v_2$ are the neighbor cycle vertices of $v$.
A cycle vertex $u$ is called a \textit{local minimum vertex} if the degree of $u$ is less than or equal to the degree of its neighbor cycle vertices, that is, $d_G(u) \leq \min\{d_G(u_i):i=1,2\}$, where $u_1$, $u_2$ are the neighbor cycle vertices of $u$.
For a cycle vertex $v$, the {\it pendant tree rooted} at $v$, denoted by $T_v$, is the maximal connected subgraph of $G$ which contains the vertex $v$ and no other cycle vertices.
We denote the set of all edges of $T_v$ in $G$ by $E_G(T_v)$ .
Let $e$ be an edge of $G$ and let $u$ and $v$ be endvertices of $e$.
Then the contribution of $e$ to $GA(G)$, denoted by $GA_G(e)$, is
\begin{equation*}
    \qquad GA_G(e)=\frac{2\sqrt{d_G(u)d_G(v)}}{d_G(u)+d_G(v)}.
\end{equation*}
Then the GA index can be written as
$$GA(G) =\sum_{e \in E(G)}GA_G(e).$$
Let $f(x)=\frac{2\sqrt{x}}{1+x}$.
Then
$$GA_G(e)=f\bigg(\frac{d_G(v)}{d_G(u)}\bigg).$$
Since the function $f$ is decreasing on $[1,\infty)$, we have $f(x) \leq 1$ and the equality holds if and only if $d_G(u)=d_G(v)$.
Throughout this paper, we assume that $d_G(u) \leq d_G(v)$ and denote the ratio of the endvertex degrees $\frac{d_G(v)}{d_G(u)}$ of $e$ as $rd_G(e)$. Since
\[
 \frac{2\sqrt{n-1}}{n} \leq GA_G(e) \leq 1,
\]
it is easy to see that the upper bound $n$ for the GA index is obtained by the cycle graph $C_n$.

\section{Lower bound of the GA index}
Let $G$ be a unicyclic graph with $n$ vertices.
If $n=3$, then $C_3$ is the unique unicyclic graph and the GA index of $C_3$ is the upper bound in inequality \eqref{eq:main}.
If $n=4$, then there are two unicyclic graphs $C_4$ and the paw graph. The GA index of $C_4$ is the upper bound and the GA index of the paw graph is the lower bound in inequality \eqref{eq:main}.
Thus from now on we make the assumption that all graphs are unicyclic with $n \geq 5$ vertices.

In order to find the lower bound for the GA index, we start with a general unicyclic graph then we cut and paste edges such that the resulting graph has smaller GA index. The following lemma shows that if the ratio of the endvertex degrees of an edge increases then the contribution of the edge to the GA index decreases.

\begin{lemma}\label{lem:ga-e}
  Given two graphs $G$ and $G'$, we let $e=uv$ and $e'=u'v'$ be edges of $G$ and $G'$, respectively.
  If $d_G(u) \geq d_{G'}(u')$ and $d_G(v) \leq d_{G'}(v')$, then
  \begin{equation*}
      GA_G(e) \geq GA_{G'}(e').
  \end{equation*}
\end{lemma}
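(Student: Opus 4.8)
The plan is to reduce the claimed inequality to the monotonicity of the function $f(x)=\frac{2\sqrt{x}}{1+x}$ already recorded in Section~2. Recall the blanket convention that the two endvertices of an edge are labelled so that the first one has the smaller degree; thus $d_G(u)\le d_G(v)$ and $d_{G'}(u')\le d_{G'}(v')$, so that $GA_G(e)=f\big(rd_G(e)\big)$ and $GA_{G'}(e')=f\big(rd_{G'}(e')\big)$ with $rd_G(e)=d_G(v)/d_G(u)\ge 1$ and $rd_{G'}(e')=d_{G'}(v')/d_{G'}(u')\ge 1$. So it suffices to show $rd_G(e)\le rd_{G'}(e')$ and then apply $f$.

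First I would chain the two hypotheses together with the labelling convention to obtain
\[
 d_{G'}(u')\le d_G(u)\le d_G(v)\le d_{G'}(v').
\]
Since all vertex degrees are positive, dividing $d_G(v)\le d_{G'}(v')$ by $d_G(u)$ and then using $d_G(u)\ge d_{G'}(u')>0$ gives
\[
 rd_G(e)=\frac{d_G(v)}{d_G(u)}\le \frac{d_{G'}(v')}{d_G(u)}\le \frac{d_{G'}(v')}{d_{G'}(u')}=rd_{G'}(e').
\]
Because both ratios lie in $[1,\infty)$ and $f$ is decreasing there, applying $f$ reverses this last inequality, yielding
\[
 GA_G(e)=f\big(rd_G(e)\big)\ge f\big(rd_{G'}(e')\big)=GA_{G'}(e'),
\]
which is exactly the conclusion.

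I do not expect any substantial obstacle: the statement is really a monotonicity bookkeeping argument. The only point that needs care is that the monotonicity of $f$ is available only on $[1,\infty)$, so one must confirm that the two compared ratios are each at least $1$ before invoking it — and this is precisely what the labelling convention $d_G(u)\le d_G(v)$, $d_{G'}(u')\le d_{G'}(v')$ guarantees. One may also observe, although the lemma does not require it, that equality holds exactly when $rd_G(e)=rd_{G'}(e')$, in particular when the four degrees coincide.
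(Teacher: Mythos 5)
Your proposal is correct and follows essentially the same route as the paper: both arguments use the labelling convention to get $1 \leq rd_G(e) \leq rd_{G'}(e')$ from the degree hypotheses and then apply the fact that $f$ is decreasing on $[1,\infty)$. You simply spell out the intermediate chain of inequalities that the paper leaves implicit.
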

\begin{proof}
    Since $f$ is decreasing on $[1, \infty)$ and
    $$1 \leq \frac{d_G(v)}{d_G(u)} \leq \frac{d_{G'}(v')}{d_{G'}(u')},$$
    we have $GA_G(e) \geq GA_{G'}(e')$.
\end{proof}
\begin{remark}
    Lemma \ref{lem:ga-e} can be restated as follows:
If $rd_G(e) \leq rd_{G'}(e')$ then $GA_G(e) \geq GA_{G'}(e')$.
\end{remark}

\begin{figure}[h!t!b!]
    \centering
    \includegraphics[width=6.5cm]{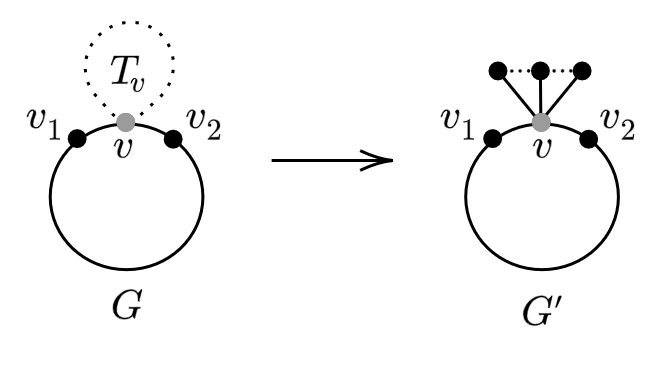}
    \caption{Transform $T_v$ into a star with center $v$}
    \label{fig:maxdeg-tree}
\end{figure}

Roughly speaking, the following proposition says that if we change the pendant tree of a cycle vertex to a star centered at that vertex then the GA index decreases.

\begin{proposition}\label{prop:maxdeg-tree}
 Let $G$ be a unicyclic graph with a local maximum vertex $v$ of $G$.
 If $G'$ is the unicyclic graph obtained from $G$ by transforming $T_v$ into a star graph with center vertex $v$(see Figure \ref{fig:maxdeg-tree}) then $GA(G') \leq GA(G).$
\end{proposition}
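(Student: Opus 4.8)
The plan is to split $E(G)$ (equivalently $E(G')$) into three parts and compare contributions part by part. Write $m=|V(T_v)|$, so that $|E_G(T_v)|=m-1$; after the transformation $d_{G'}(v)=2+(m-1)=m+1$, every vertex of $T_v$ other than $v$ is a pendant vertex of $G'$, and every vertex outside $V(T_v)$ keeps its degree. Since $T_v$ meets the cycle only at $v$ and $G$ is unicyclic, every edge of $G$ incident to a vertex of $V(T_v)\setminus\{v\}$ already lies in $E_G(T_v)$; hence $E(G)$ decomposes as $E_G(T_v)$, the two cycle edges $vv_1$ and $vv_2$ at $v$, and a remaining set of edges, none of which touches $V(T_v)$. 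The edges in the last set have both endpoint degrees unchanged, so they contribute equally to $GA(G)$ and $GA(G')$ and may be discarded; it remains to handle the first two parts. (If $m=1$ then $G'=G$ and there is nothing to prove.)

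For the two cycle edges at $v$: because $v$ is a local maximum vertex, $d_G(v)\ge d_G(v_i)$ for $i=1,2$, and since $d_{G'}(v)=m+1\ge d_G(v)$ while $d_{G'}(v_i)=d_G(v_i)$, the larger endpoint-degree has increased and the smaller one is unchanged. Lemma~\ref{lem:ga-e} (applied with $v_i$ in the role of the smaller-degree endpoint $u$) then gives $GA_{G'}(vv_i)\le GA_G(vv_i)$ for $i=1,2$. For the tree edges: in $G'$ the $m-1$ edges of the star each have endpoint degrees $1$ and $m+1$, hence each contributes $f(m+1)$. For an edge $e\in E_G(T_v)$, both endpoints lie in $V(T_v)$; a vertex of $T_v$ distinct from $v$ has all its neighbors inside $T_v$ and therefore degree at most $m-1$, while $d_G(v)\le 2+(m-1)=m+1$; consequently $rd_G(e)\le m+1$, and since $f$ is decreasing, $GA_G(e)=f(rd_G(e))\ge f(m+1)$. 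Summing over the $m-1$ edges of $E_G(T_v)$ yields $\sum_{e\in E_G(T_v)}GA_G(e)\ge (m-1)f(m+1)=\sum_{e\in E_{G'}(T_v)}GA_{G'}(e)$.

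Adding the two comparisons (together with the neutrality of the remaining edges) gives $GA(G')\le GA(G)$. The only place the local-maximum hypothesis enters is the cycle-edge step; everything else is bookkeeping. I expect the one point needing care to be the uniform estimate $rd_G(e)\le m+1$ on the tree edges — this is what lets us avoid any case analysis on the shape of $T_v$, and it is exactly the value attained (with equality) by every edge of the star — along with the verification that no edge outside $E_G(T_v)$ has its endpoint degrees altered by the transformation.
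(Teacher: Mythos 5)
Your proof is correct and follows essentially the same route as the paper: the same edge decomposition, the same uniform bound on the tree-edge ratios (your $m+1$ is exactly the paper's $\lvert E_G(T_v)\rvert+2$), and the same use of Lemma~\ref{lem:ga-e} together with the local-maximum hypothesis for the two cycle edges at $v$. Your version merely spells out the bookkeeping (unchanged degrees outside $T_v$, the degree bounds inside $T_v$) that the paper leaves implicit.
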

\begin{proof}
  We rearrange each edge $e$ in $E_G(T_v)$ to a pendant edge at $v$ in $G'$.
  Since
  \begin{equation*}
      rd_G(e) \leq \frac{\lvert E_G(T_v) \rvert +2}{1} = \frac{\lvert E_{G'}(T_v) \rvert +2}{1}=rd_{G'}(e),
  \end{equation*}
  by Lemma \ref{lem:ga-e}, $GA_G(e) \leq GA_{G'}(e)$.
  Let $v_1$ and $v_2$ be the neighbor cycle vertices of $v$.
  Now, we consider the contribution of the cycle edges $vv_1$ and $vv_2$.
  Since the degrees of $v_1$ and $v_2$ are unchanged and $d_G(v)\leq d_{G'}(v)$, we have
  \[
  1 \leq \frac{d_G(v)}{d_G(v_i)} \leq \frac{d_{G'}(v)}{d_{G'}(v_i)} ~~\text{for $i=1,2$.}
  \]
  By Lemma \ref{lem:ga-e},  $GA_G(vv_i) \geq GA_{G'}(vv_i)$ for $i=1,2$. Hence $GA(G) \geq GA(G')$.
\end{proof}

\begin{corollary}\label{coro:maxdeg-tree}
  Let $G$ be a unicyclic graph and let $v$ be a maximal degree cycle vertex of $G$.
  If $G'$ is the unicyclic graph obtained from $G$ by transforming $T_v$ into a star graph with center vertex $v$, then $GA(G') \leq GA(G)$.
\end{corollary}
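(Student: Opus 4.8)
The plan is to deduce this corollary directly from Proposition~\ref{prop:maxdeg-tree}, so the only real work is to verify that its hypothesis is satisfied. Concretely, I would argue that a maximal degree cycle vertex is automatically a local maximum vertex, after which the transformation of $T_v$ into a star and the conclusion $GA(G')\leq GA(G)$ are exactly those supplied by the proposition.

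First I would fix the two neighbor cycle vertices $v_1$ and $v_2$ of $v$; since $G$ is unicyclic with $n\geq 5$, every cycle vertex has precisely two such neighbors, so this makes sense. Because $v$ has the largest degree among \emph{all} cycle vertices of $G$, in particular $d_G(v)\geq d_G(v_1)$ and $d_G(v)\geq d_G(v_2)$, and hence $d_G(v)\geq \max\{d_G(v_i):i=1,2\}$. By the definition recalled above, this says exactly that $v$ is a local maximum vertex of $G$.

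With this observation in hand, the graph $G'$ obtained from $G$ by replacing $T_v$ with a star centered at $v$ is precisely the graph produced in Proposition~\ref{prop:maxdeg-tree}, so that proposition yields $GA(G')\leq GA(G)$. I do not expect any genuine obstacle here: the corollary is simply the special case of the proposition in which the chosen local maximum vertex also realizes the maximum degree over the whole cycle, and the only point to notice is the immediate implication ``maximal degree cycle vertex $\Rightarrow$ local maximum vertex.'' The one remark perhaps worth making explicit is that a maximal degree cycle vertex need not be unique, but the statement only asks us to select one such $v$, so this causes no difficulty.
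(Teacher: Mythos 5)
Your proof is correct and matches the paper's intent: the corollary is stated without proof precisely because a maximal degree cycle vertex is in particular a local maximum vertex, so Proposition~\ref{prop:maxdeg-tree} applies verbatim. Your observation that $d_G(v)\geq\max\{d_G(v_1),d_G(v_2)\}$ is exactly the needed (and only) verification.
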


\begin{figure}[h!t!b!]
    \centering
    \includegraphics[width=8cm]{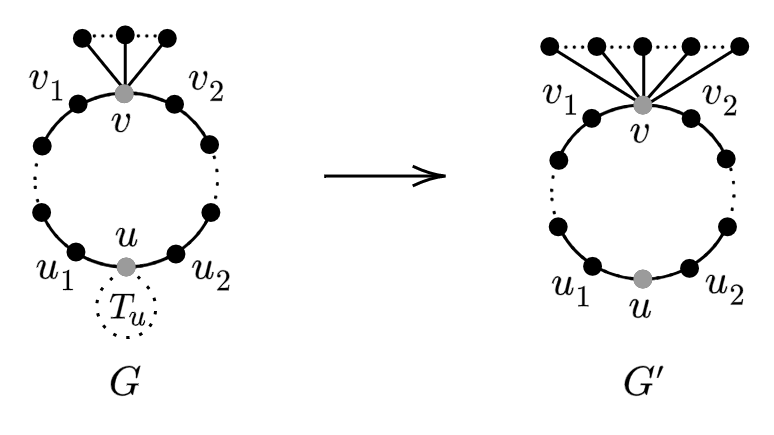}
    \caption{Edges in $E_G(T_u)$ relocated to pendant edges at $v$ in $G'$}
    \label{fig:mindeg}
\end{figure}

\begin{proposition}\label{prop:mindeg}
   Let $G$ be a unicyclic graph.
   Suppose that $v$ is a local maximum vertex of $G$ and $T_{v}$ is a star with center $v$.
   If $u$ is a local minimum vertex of $G$
   and $G'$ is the graph obtained from $G$ by relocating all the edges of the pendant tree $T_u$ to pendant edges at $v$ then $GA(G') \leq GA(G)$ (see Figure \ref{fig:mindeg}).
\end{proposition}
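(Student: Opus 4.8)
The plan is to build a bijection $\varphi\colon E(G)\to E(G')$ — which makes sense since $G'$ is obtained from $G$ merely by relocating edges, so $\lvert E(G')\rvert=\lvert E(G)\rvert$ — and to verify that $rd_G(e)\le rd_{G'}(\varphi(e))$ for every edge $e$. By the Remark after Lemma~\ref{lem:ga-e} this yields $GA_G(e)\ge GA_{G'}(\varphi(e))$ edge by edge, and summing over $E(G)$ gives $GA(G)\ge GA(G')$. Write $p=d_G(u)$, $q=d_G(v)$ and $m=\lvert E_G(T_u)\rvert$. Relocating $T_u$ changes only the degree of $u$ (from $p$ down to $2$) and of $v$ (from $q$ up to $q+m$), and it turns each of the $m$ vertices of $T_u$ other than $u$ into a pendant neighbour of $v$; every other vertex degree is unchanged. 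If $m=0$ then $G'=G$, so assume $m\ge1$. Since $u$ has $p-2$ edges lying in $T_u$ we have $m\ge p-2$, and since $v$ is a cycle vertex $q\ge2$; in particular $q+m\ge p$. Let $\varphi$ carry the $m$ edges of $T_u$ onto the $m$ new pendant edges at $v$ arbitrarily, and fix every other edge.

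The core of the argument is the edgewise check. (i) For $e\in E_G(T_u)$ let $x$ be its higher-degree endpoint; then $rd_G(e)\le d_G(x)\le\max(p,m)$, since any vertex of $T_u$ other than $u$ has all of its edges inside the $m$-edge tree $T_u$. As $\max(p,m)\le q+m=rd_{G'}(\varphi(e))$, these edges are fine. (ii) Each pendant edge of the star $T_v$ has $rd_G=q\le q+m=rd_{G'}$. (iii) For a cycle edge $uu_i$ with $u_i\ne v$, local minimality of $u$ gives $d_G(u_i)\ge p$, so $rd_G(uu_i)=d_G(u_i)/p\le d_G(u_i)/2=rd_{G'}(uu_i)$ since $p\ge2$. (iv) For a cycle edge $vv_i$ with $v_i\ne u$, local maximality of $v$ gives $d_G(v_i)\le q$, so $rd_G(vv_i)=q/d_G(v_i)\le(q+m)/d_G(v_i)=rd_{G'}(vv_i)$. (v) If $u$ and $v$ are adjacent then the edge $uv$ has $rd_G(uv)=q/p$ (using $p\le q$) and $rd_{G'}(uv)=(q+m)/2$, and $(q+m)/2\ge q/p$ reduces to $q(p-2)+pm\ge0$, which is true. (vi) Every remaining edge joins two vertices whose degrees are untouched — a cycle edge between cycle vertices other than $u,v$, or an edge of a pendant tree $T_w$ with $w$ a cycle vertex other than $u,v$ — so $rd_G(e)=rd_{G'}(e)$.

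Putting the cases together, $GA_G(e)\ge GA_{G'}(\varphi(e))$ for every $e\in E(G)$, and summing over all edges gives $GA(G)\ge GA(G')$. I expect the only delicate points to be case (v), the edge $uv$ in the adjacent situation, where one must combine $p\le q$ with $p\ge2$, and case (i), where the bound $rd_G(e)\le\max(p,m)\le q+m$ relies on $m\ge p-2$ and $q\ge2$; the rest is a straightforward invocation of Lemma~\ref{lem:ga-e} and the Remark following it.
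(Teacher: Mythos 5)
Your proof is correct and follows essentially the same route as the paper: an edge-by-edge comparison of the ratios $rd_G(e)\le rd_{G'}(e)$ for the edges of $T_u$, $T_v$, and the cycle edges at $u$ and $v$, concluding via Lemma~\ref{lem:ga-e} and its remark. The only cosmetic differences are that you make the bijection explicit, bound the $T_u$-edges by $\max(p,m)$ instead of the paper's $m+2$, and settle the adjacent case $uv$ by a direct algebraic inequality where the paper simply cites the degree changes $d_G(u)\ge d_{G'}(u)=2$, $d_G(v)\le d_{G'}(v)$.
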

\begin{proof}
   We compare the GA index of $G$ and $G'$. In the graph $G'$, the degree of the cycle vertex $v$ is increased and the degree of the cycle vertex $u$ is decreased. However, the degrees of the other cycle vertices are not changed.
   Thus we only need to consider the edges of $T_u$, $T_v$ and the cycle edges incident to $u$, $v$.
   We compute the changes of the contribution of the edges of $T_u$ in $G$.
   Suppose that an edge $e$ in $E_G(T_u)$ is relocated to a pendant edge at $v$ in $G'$.
   For each $e \in E_G(T_u)$,
   \begin{equation*}
      rd_G(e) \leq \frac{\lvert E_G(T_u) \rvert +2}{1}\leq \frac{\lvert E_{G'}(T_v) \rvert +2}{1}=rd_{G'}(e).
   \end{equation*}
   Thus $GA_G(e) \geq GA_{G'}(e)$, by Lemma \ref{lem:ga-e}.
   For each $e \in E_G(T_v)$,
   \begin{equation*}
     rd_G(e)=\frac{d_G(v)}{1} \leq \frac{d_{G'}(v)}{1}=rd_{G'}(e).
   \end{equation*}
   Thus $GA_G(e) \geq GA_{G'}(e)$.
   Let $u_1$ and $u_2$ be the neighbor cycle vertices of $u$.
   Since $d_G(u) \geq d_{G'}(u)=2$ and $d_G(u_i) = d_{G'}
   (u_i)$ for $i=1,2$, the contribution of the cycle edges incident to $u$ decreases.
   Now, we consider the cycle edges incident to $v$.
   Let $v_1$ and $v_2$ be the neighbor cycle vertices of $v$.
   If $u \neq v_i$ for all $i=1,2$, then $d_G(v) \leq d_{G'}(v)$ and $d_G(v_i)=d_{G'}(v_i)$ for all $i=1,2$.
   If $u=v_i$ for some $i=1,2$, then $d_G(v) \leq d_{G'}(v)$ and $d_G(u) \geq d_{G'}(u)=2$.
   For both cases, the contribution of the cycle edges incident to $v$ decreases, by Lemma \ref{lem:ga-e}.
   Therefore, we have $GA(G') \leq GA(G)$.
\end{proof}

\begin{figure}[h!t!b!]
    \centering
    \includegraphics[width=8cm]{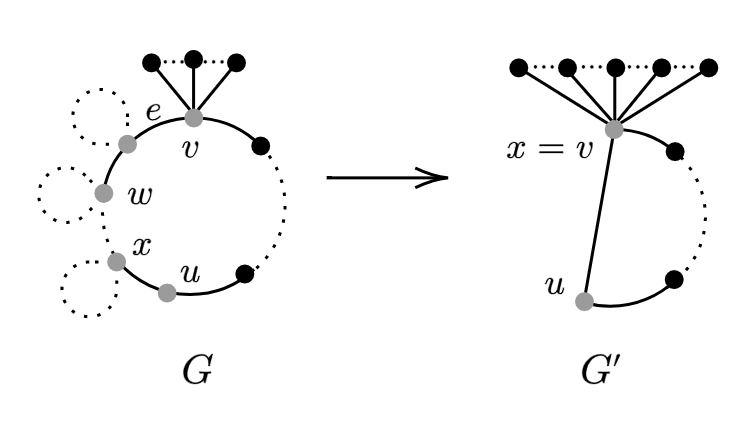}
    \caption{The $\operatorname{arc}(uev)$-transformation}
    \label{fig:arctrans}
\end{figure}

Let $u,v$ be two non-adjacent cycle vertices and let $e$ be a cycle edge.
We denote the $(u,v)$-arc containing the edge $e$ by $\operatorname{arc}(uev)$.
We define the \textit{$\operatorname{arc}(uev)$-transformation} from $G$ to $G'$ as follows:

\begin{itemize}
    \item[(1)] Suppose that $w$ is a cycle vertex, different from $u$ and $v$, lying on the $\operatorname{arc}(uev)$.
    Then we relocate all the edges in $E_G(T_w)$ to pendant edges at $v$.
    \item[(2)] Let $x$ be a cycle vertex on the $\operatorname{arc}(uev)$ adjacent to $u$.
    We relocate all the cycle edges on the  $\operatorname{arc}(uev)$ to pendant edges at $v$, except for the cycle edge $ux$, and then we identify $v$ with $x$ (see Figure \ref{fig:arctrans}).
\end{itemize}

\begin{proposition}\label{prop:arctrans}
    Let $G$ be a unicyclic graph.
    Let $u$, $v$ be non-adjacent cycle vertices and let $e$ be a cycle edge.
    Suppose that
    \begin{itemize}
        \item[(1)] $v$ is a local maximum vertex and $T_v$ is a star with center $v$ and
        \item[(2)] each cycle vertex $w$ lying on the $\operatorname{arc}(uev)$ satisfies $d_G(u) \leq d_G(w) \leq d_G(v)$.
    \end{itemize}
    If $G'$ is a graph obtained by the $\operatorname{arc}(uev)$-transformation, then $GA(G') \leq GA(G)$.
\end{proposition}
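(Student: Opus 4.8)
The plan is to argue edge by edge, exactly as in the proofs of Propositions~\ref{prop:maxdeg-tree} and~\ref{prop:mindeg}: I will set up a bijection $\phi\colon E(G')\to E(G)$ with $GA_{G'}(e')\le GA_G(\phi(e'))$ for every $e'\in E(G')$, so that summing over all edges yields $GA(G')\le GA(G)$. First fix notation. Let the cycle of $G$ have length $\ell$, write the vertices of $\operatorname{arc}(uev)$ in order as $u=w_0,w_1,\dots,w_t,w_{t+1}=v$ (so that $w_1=x$ and $t\ge 1$, since $u$ and $v$ are non-adjacent), and let $\operatorname{arc}'$ denote the complementary $(u,v)$-arc, which has at least two edges. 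The $\operatorname{arc}(uev)$-transformation keeps the edge $uw_1$, which becomes the cycle edge $uv$ once $v$ is identified with $w_1$, and it turns the $t$ cycle edges $w_1w_2,\dots,w_tv$ together with the $\sum_{i=1}^{t}|E_G(T_{w_i})|$ pendant-tree edges of the $w_i$ into pendant edges at $v$, leaving everything else alone. A direct count then gives $d_{G'}(u)=d_G(u)$ and
\[
  d_{G'}(v)=d_G(v)+t+\sum_{i=1}^{t}|E_G(T_{w_i})|,
\]
while every cycle vertex $z$ on $\operatorname{arc}'$ other than $v$, every vertex of $T_u$, and every vertex of the pendant trees rooted on $\operatorname{arc}'$ has the same degree in $G'$ as in $G$.

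Next I define $\phi$ and check the comparison in cases. On the edges of $\operatorname{arc}'$ not incident to $v$, and on the edges of every pendant tree $T_z$ with $z\notin\{w_1,\dots,w_t,v\}$, let $\phi$ be the identity; these contributions are unchanged. For the edge $vv'$ joining $v$ to its neighbour $v'$ on $\operatorname{arc}'$, put $\phi(vv')=vv'$: since $v$ is a local maximum vertex, $d_G(v)\ge d_G(v')$, and as $d_{G'}(v)\ge d_G(v)$ while $d_{G'}(v')=d_G(v')$ we get $rd_G(vv')\le rd_{G'}(vv')$, so $GA_{G'}(vv')\le GA_G(vv')$ by Lemma~\ref{lem:ga-e}. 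Put $\phi(uv)=uw_1$: hypothesis~(2) gives $d_G(u)\le d_G(w_1)\le d_G(v)\le d_{G'}(v)$ and $d_{G'}(u)=d_G(u)$, hence $rd_G(uw_1)=d_G(w_1)/d_G(u)\le d_{G'}(v)/d_{G'}(u)=rd_{G'}(uv)$, and Lemma~\ref{lem:ga-e} again applies. Finally, among the $d_{G'}(v)-2$ pendant edges at $v$ in $G'$, let $\phi$ send the $d_G(v)-2$ that already existed in $G$ to themselves, the $t$ that came from the relocated cycle edges to $w_1w_2,\dots,w_tv$, and the $\sum_i|E_G(T_{w_i})|$ that came from relocated pendant-tree edges to those edges of the $T_{w_i}$.

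What remains is to check, for each edge $e$ in this last group, that $rd_G(e)\le rd_{G'}(\phi^{-1}(e))=d_{G'}(v)$; then Lemma~\ref{lem:ga-e} (in the form of the Remark following it) closes each case. For a relocated cycle edge $w_iw_{i+1}$ (with $w_{t+1}=v$), hypothesis~(2) places both endpoint degrees in $[2,d_G(v)]$, so $rd_G(w_iw_{i+1})\le d_G(v)\le d_{G'}(v)$. For an edge $e=ab$ lying inside some $T_{w_i}$, the key observation is that, $G$ being unicyclic, every vertex of $T_{w_i}$ other than its root $w_i$ has all of its $G$-edges inside $T_{w_i}$ and hence degree at most $|E_G(T_{w_i})|$, while $d_G(w_i)\le d_G(v)$ by hypothesis~(2); therefore
\[
  rd_G(ab)\le\max\{d_G(a),d_G(b)\}\le\max\{d_G(v),\,|E_G(T_{w_i})|\}\le d_{G'}(v).
\]
I expect this case — controlling the \emph{interior} edges of the pendant trees $T_{w_i}$, not just those meeting the $w_i$ — to be the crux, as it is the only place where the actual size of $d_{G'}(v)$ is needed; the relocated cycle edges and the edge $uv$ use only hypothesis~(2). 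It then remains to observe that $\phi$ is a bijection, which is a counting check ($\ell$ cycle edges and $n-\ell$ pendant-tree edges on each side, matched summand by summand), after which
\[
  GA(G')=\sum_{e'\in E(G')}GA_{G'}(e')\le\sum_{e'\in E(G')}GA_G(\phi(e'))=\sum_{e\in E(G)}GA_G(e)=GA(G).
\]
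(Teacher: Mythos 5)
Your proof is correct and follows essentially the same route as the paper: an edge-by-edge comparison of degree ratios via Lemma \ref{lem:ga-e}, covering the relocated pendant-tree edges, the relocated cycle edges of the arc, the retained edge $ux$ (your $uv$), the old pendant edges of $T_v$, and the cycle edge from $v$ into the complementary arc, just with the bookkeeping made explicit through a bijection and the formula for $d_{G'}(v)$. Two harmless nitpicks: in the final verification you never spell out the trivial sub-case $rd_G(e)=d_G(v)\le d_{G'}(v)$ for the pendant edges of $T_v$ that map to themselves, and the parenthetical count ``$\ell$ cycle edges on each side'' is off since the cycle of $G'$ has only $\ell-t$ edges --- neither affects the argument.
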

\begin{proof}
  Let $w$ be a cycle vertex lying on the $\operatorname{arc}(uev)$.
  For each edge $e$ in $E_G(T_w)$,
  $$rd_G(e) \leq \frac{\lvert E_G(T_w) \rvert +2}{1} \leq \frac{\lvert E_{G'}(T_v) \rvert+2}{1}=rd_{G'}(e).$$ By Lemma \ref{lem:ga-e}, we have $GA_G(e)\geq GA_{G'}(e)$.
  For a cycle edge on the $\operatorname{arc}(uev)$ not incident to $u$,
  let $a$ and $b$ be endvertices of the edge.
  We may assume that $d_G(a) \leq d_G(b)$.
  Then $d_G(a) \leq d_G(b) \leq d_G(v) \leq d_{G'}(v)$.
  Since $$\frac{d_G(b)}{d_G(a)} \leq \frac{d_{G'}(v)}{1},$$ the contribution of the edge decreases.
  Now, we consider the cycle edge $xu$.
  Since $$d_{G'}(u) = d_G(u) \leq d_G(x) \leq d_G(v) \leq d_{G'}(v),$$
  we have $$\frac{d_G(x)}{d_G(u)} \leq \frac{d_{G'}(v)}{d_{G'}(u)}.$$
  Thus the contribution of the cycle edge $xu$ decreases.
  For each edge in $E_G(T_v)$, the contribution of the edge decreases because
  the degree of $v$ is increased.
  Finally, we examine the contribution of the cycle edge incident to $v$ not lying on the $\operatorname{arc}(uev)$.
  Since the degree of $v$ is increased and the degree of the other endvertex is unchanged, the contribution of the cycle edge decreases.
\end{proof}

\begin{figure}[h!b!t!]
    \centering
    \includegraphics[width=9cm]{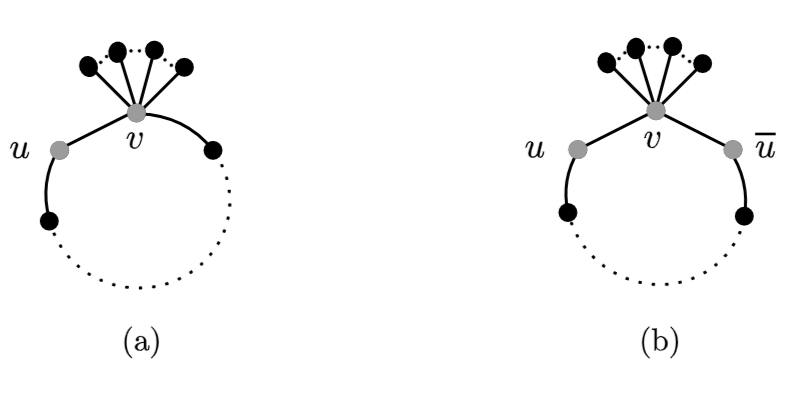}
    \caption{(a) $v$ has exactly one cycle vertex $u$ of degree 2 (b) $v$ has two neighbor cycle vertices $u$ and $\bar{u}$ of degree 2}
    \label{fig:case-nbd}
\end{figure}

Let $G$ be a unicyclic graph with $n$ vertices.
We perform the following steps to transform the unicyclic graph $G$ into a unicyclic graph $G'$ with a smaller GA index.

\begin{itemize}
    \item [1.] Let $v$ be a maximal degree cycle vertex. Let $v_1$ and $v_2$ be the neighbor cycle vertices of $v$.
    We rearrange all the edges in $E_G(T_v)$ to pendant edges at $v$. Then $T_v$ becomes a star with center $v$ in the new graph $G_1$.
    By Corollary \ref{coro:maxdeg-tree}, $G_1$ has a smaller GA index (see Figure \ref{fig:maxdeg-tree}).
    \item [2.] Let $u$ be a minimal degree cycle vertex in $G_1$. We relocate all the edges of $T_{u}$ to pendant edges at $v$.
    By Proposition \ref{prop:mindeg}, the new graph $G_2$ has a smaller GA index.
    We note that the degree of $u$ in $G_2$ is 2 (see Figure \ref{fig:mindeg}).
    \item[3.] We consider the following two cases.
    If $u$ is adjacent to $v$, then we put $G_3=G_2$ and go to step 4.
    Suppose $u$ is not adjacent to $v$. We let $e=vv_1$. By the $\operatorname{arc}(uev)$-transformation, we obtain a new graph $G_3$.
    By Proposition \ref{prop:arctrans}, $GA(G_3) \leq GA(G_2)$. In both cases, $u$ is a neighbor of $v$ with degree 2 in $G_3$ (see Figure \ref{fig:arctrans}).
    \item[4.]
    If there is no local minimum vertex other than $u$, then $v$ has exactly one neighbor cycle vertex of degree 2 (see Figure \ref{fig:case-nbd} (a)).
    Suppose that there exists a local minimum vertex $\bar{u}$ which is not $u$ in $G_3$.
    If $\bar{u}$ is adjacent to $v$, that is, $\bar{u}=v_2$,
    then by step 2, we transform the graph so that the neighbor cycle vertex $\bar{u}$ of $v$ has degree 2.
    The transformed graph has a smaller GA index  by Proposition \ref{prop:mindeg}.
    If $\bar{u}$ is not adjacent to $v$, then let $\bar{e}=vv_2$.
    By step 2 and the $\operatorname{arc}(\bar{u}\bar{e}v)$-transformation, we obtain a new graph with a smaller GA index.
    In both cases, $v$ has two neighbor cycle vertices $u$ and $\bar{u}$ of degree 2 (see Figure \ref{fig:case-nbd} (b)).

\end{itemize}

\begin{figure}[t!]
    \centering
    \includegraphics[width=10cm]{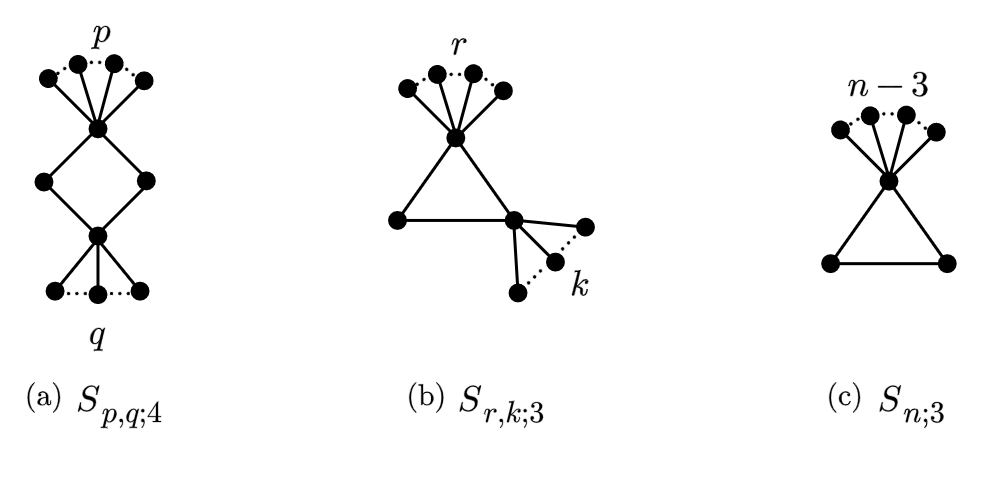}
    \caption{The graphs $S_{p,q;4}$, $S_{r,k;3}$ and $S_{n;3}$}
    \label{fig:graphs}
\end{figure}

Now, we consider the two graphs in Figure \ref{fig:case-nbd}. First, we introduce some notations.
The graph with $n$ vertices obtained by attaching $p$ and $q$ pendant vertices, respectively, to the two nonadjacent vertices of the cycle $C_4$ is denoted by
$S_{p,q;4}$ (see Figure \ref{fig:graphs} (a)).
The graph with $n$ vertices obtained by attaching $r$ and $k$ pendant vertices, respectively, to two vertices of the cycle $C_3$ is denoted by
$S_{r,k;3}$ (see Figure \ref{fig:graphs} (b)).
The graph with $n$ vertices obtained by attaching $n-3$ pendant vertices to a vertex of the cycle $C_3$ is denoted by $S_{n;3}$ (see Figure \ref{fig:graphs} (c)).

The following proposition shows that the GA index of $S_{p,q;4}$ is smaller than the graph in Figure \ref{fig:case-nbd} (b) of girth at least 4.
\begin{proposition}\label{prop:case-deg2}
    Let $G$ be an $n$-vertex unicyclic graph of girth at least 4.
    Let $v$ be a maximal degree cycle vertex and let $T_v$ be a star with center $v$.
    Suppose that $v$ has two neighbor cycle vertices of degree 2.
    Then $$GA(G) \geq GA(S_{p,q;4})$$ for some non-negative integers $p$ and $q$.
\end{proposition}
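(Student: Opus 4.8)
The plan is to turn $G$ into one of the graphs $S_{p,q;4}$ by a short sequence of the operations already available to us (Propositions \ref{prop:maxdeg-tree} and \ref{prop:arctrans}), each of which does not increase the GA index. Write the cycle of $G$ as $v\,s\,c_1\,c_2\cdots c_{g-3}\,t\,v$, where $g\ge 4$ is the girth and $s,t$ are the two degree-$2$ cycle neighbors of $v$. If $g=4$, then $c_1$ is the only cycle vertex besides $v,s,t$, its cycle neighbors are $s$ and $t$ (both of degree $2$), so $c_1$ is a local maximum vertex and, by Proposition \ref{prop:maxdeg-tree}, we may turn $T_{c_1}$ into a star centered at $c_1$ without increasing the GA index. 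The resulting graph is $C_4$ with $d_G(v)-2$ pendant leaves at $v$ and a star pendant tree at the opposite vertex $c_1$, that is, it is $S_{p,q;4}$ with $p=d_G(v)-2$ and $q\ge 0$ fixed by $p+q+4=n$. So assume from now on that $g\ge 5$.

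Let $c_m$ be a vertex of maximum degree among $c_1,\dots,c_{g-3}$. Each of its cycle neighbors is either some $c_i$, of degree at most $d_G(c_m)$, or one of $s,t$, of degree $2$; hence $c_m$ is a local maximum vertex, and by Proposition \ref{prop:maxdeg-tree} we may assume $T_{c_m}$ is a star centered at $c_m$ (this does not increase the GA index and leaves the degrees of $v,s,t$ unchanged). I now shorten the cycle to length $4$ using the $\operatorname{arc}$-transformation of Proposition \ref{prop:arctrans}. If $m=1$ (the case $m=g-3$ is symmetric), apply the $\operatorname{arc}(t\,e\,c_1)$-transformation, where $e$ is a cycle edge on the $(t,c_1)$-arc not through $v$: this is valid since $t$ and $c_1$ are non-adjacent (as $g\ge 5$), since $c_1$ is a local maximum vertex with star pendant tree, and since every cycle vertex $w$ on that arc satisfies $2=d_G(t)\le d_G(w)\le d_G(c_1)$, the last inequality because $c_1$ has maximum degree among the $c_i$. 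The complementary $(c_1,t)$-arc $c_1\,s\,v\,t$ has length $3$, so the new cycle has length $4$. If instead $2\le m\le g-4$, first apply the $\operatorname{arc}(s\,e_1\,c_m)$-transformation (collapsing the cycle segment $s\,c_1\cdots c_m$ onto $c_m$) and then, in the resulting graph, the $\operatorname{arc}(t\,e_2\,c_m)$-transformation (collapsing $t\,c_{g-3}\cdots c_m$ onto $c_m$); the hypotheses of Proposition \ref{prop:arctrans} hold both times, the second time because the first transformation only relocates edges onto $c_m$ --- so its degree can only grow and it remains a local maximum with a star pendant tree --- while leaving $c_{m+1},\dots,c_{g-3},t,v$ and their degrees untouched. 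After the first collapse the arc $c_m\,s\,v\,t$ still has length $3$, so the second collapse yields a cycle of length $4$.

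In every case the final graph $G'$ is a $4$-cycle with $d_G(v)-2$ pendant leaves at $v$ (the tree $T_v$ was never touched) and a star pendant tree at the cycle vertex opposite $v$, the remaining two cycle vertices having degree $2$; hence $G'=S_{p,q;4}$ with $p=d_G(v)-2\ge 0$ and $q\ge 0$ determined by $p+q+4=n$. Since no step increased the GA index, $GA(G)\ge GA(G')=GA(S_{p,q;4})$. The delicate part is verifying the hypotheses of Proposition \ref{prop:arctrans} at each invocation --- in particular, in the interior case, checking after the first $\operatorname{arc}$-transformation that $c_m$ is still a local maximum vertex whose pendant tree is a star and that $d_G(w)\le d(c_m)$ still holds along the second arc --- together with the routine bookkeeping confirming that the output is literally of the form $S_{p,q;4}$.
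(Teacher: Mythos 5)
Your proof is correct and follows essentially the same route as the paper: collapse the cycle onto a maximum-degree interior cycle vertex using one or two $\operatorname{arc}$-transformations (according to whether that vertex is adjacent to one of the degree-2 neighbors of $v$), then turn its pendant tree into a star, producing $S_{p,q;4}$ with no step increasing the GA index. The only difference is ordering---you make $T_{c_m}$ a star \emph{before} the $\operatorname{arc}$-transformations, which if anything matches the stated hypotheses of Proposition \ref{prop:arctrans} more literally than the paper's choice of doing it afterwards.
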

\begin{proof}
    Let $u$ and $\bar{u}$ be the neighbor cycle vertices of $v$.
    We denote the remaining cycle vertices by $v_1,\ldots,v_t$ for $t\geq 1$.
    If $t=1$, then we put $G'=G$.
    For $t\geq 2$, assume that $v_1$ is adjacent to $u$ and $v_t$ is adjacent to $\bar{u}$.
    Let $\bar{v}$ be the maximal cycle vertex in $\{v_1,\ldots,v_t\}$.
    If $\bar{v}$ is adjacent to $u$, then we let $\bar{e}=\bar{u}v_t$.
    By the $\operatorname{arc}(\bar{u}\bar{e}\bar{v})$-transformation, we obtain a graph $G'$ of girth 4.
    If $\bar{v}$ is adjacent to $\bar{u}$, then we let $e=uv_1$. By the $\operatorname{arc}(ue\bar{v})$-transformation, we can show that the transformed graph $G'$ has girth 4.
    Suppose that $\bar{v}$ is not adjacent to $u$ nor $\bar{u}$.
    Let $e_1=uv_1$ and $e_2=\bar{u}v_t$.
    Then by the $\operatorname{arc}(ue_1\bar{v})$-transformation and $\operatorname{arc}(\bar{u}e_2\bar{v})$-transformation, we have a new graph $G'$ of girth 4.
    By Proposition \ref{prop:arctrans}, $G'$ has a smaller GA index.

    Since $\bar{v}$ has the neighbor cycle vertices $u$ and $\bar{u}$, the cycle vertex $\bar{v}$ is a local maximum vertex.
    We rearrange all the edges of $T_{\bar{v}}$ to pendant edges at $\bar{v}$.
    Then the graph $G'$ is transformed to $S_{p,q;4}$ for some non-negative integers $p,q$.
    By Proposition \ref{prop:maxdeg-tree}, the GA index of $S_{p,q;4}$ is smaller than the GA index of $G'$.
    Thus we have
    \begin{equation*}
        GA(G) \geq GA(G') \geq GA(S_{p,q;4}).
    \end{equation*}
\end{proof}

\begin{remark}
  The condition that the girth is at least 4 in Proposition \ref{prop:case-deg2} is due to the fact that if the girth is 3, then $G$ is equal to  $S_{n;3}$.
\end{remark}

The following proposition states that the graph in Figure \ref{fig:case-nbd} (a) can be transformed into either $S_{p,q;4}$ or $S_{r,k;3}$ which has a smaller GA index.

\begin{proposition}
    Let $G$ be a unicyclic graph with $n$ vertices.
    Let $v$ be a maximal degree cycle vertex and let $T_v$ be a star with center $v$.
    Suppose that $v$ has exactly one neighbor cycle vertex $u$ with degree 2 and there is no local minimum vertex other than $u$.
    Then $$GA(G) \geq GA(S_{p,q;4}) ~~\text{or}~~GA(G) \geq GA(S_{r,k;3})$$ for some non-negative integers $p$, $q$, $r$ and $k$.
\end{proposition}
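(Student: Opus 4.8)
The plan is to read off the local structure of $G$ from the hypotheses, collapse the cycle to a triangle by an $\operatorname{arc}$-transformation, and then finish by a short case analysis, with essentially one residual case carrying all of the difficulty. So I first fix notation for the cycle, writing it as $v,u,w_{1},\dots ,w_{s},v$, where $u$ and $w_{s}$ are the two cycle-neighbours of $v$ and $d_{G}(u)=2$. If some $w_{i}$ had degree $2$ it would be a cycle vertex different from $u$ all of whose cycle-neighbours have degree at least $2$, hence a local minimum distinct from $u$, which is excluded; so $d_{G}(w_{i})\ge 3$ for all $i$, and in particular $d_{G}(v)\ge 3$. Since $v$ is a local maximum and $u$ the unique local minimum, the cyclic degree sequence $2,d_{G}(w_{1}),\dots ,d_{G}(w_{s}),d_{G}(v)$ is unimodal with its peak at $v$, so that $3\le d_{G}(w_{1})\le d_{G}(w_{2})\le\cdots \le d_{G}(w_{s})\le d_{G}(v)$.

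Next I would shrink the cycle. If $s\ge 2$ then $w_{1}$ and $v$ are non-adjacent, $v$ is a local maximum with $T_{v}$ a star, and every cycle vertex $w$ on $\operatorname{arc}(w_{1}\,e\,v)$ with $e=w_{1}w_{2}$ satisfies $d_{G}(w_{1})\le d_{G}(w)\le d_{G}(v)$; thus Proposition~\ref{prop:arctrans} applies to the $\operatorname{arc}(w_{1}\,e\,v)$-transformation, and the resulting graph has no larger GA index, a triangle $v\,u\,z$ as its cycle (with $z$ the image of $w_{1}$), in which $u$ still has degree $2$, $T_{v}$ is still a star, and $3\le d(z)\le d(v)$ with the unchanged pendant tree $T_{z}$. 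When $s=1$ there is nothing to do. Hence I may assume that $G$ itself is a triangle $v\,u\,z$ with $d_{G}(u)=2$, $T_{v}$ a star, and $3\le d_{G}(z)\le d_{G}(v)$.

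For the case analysis: if $T_{z}$ is a star with centre $z$, then $G=S_{r,k;3}$ with $r=d_{G}(v)-2$ and $k=d_{G}(z)-2$, and we are done. If $d_{G}(z)=d_{G}(v)$, then $z$ is a local maximum of $G$, so Proposition~\ref{prop:maxdeg-tree} lets us replace $T_{z}$ by a star with centre $z$ without increasing $GA$, which reduces to the previous case and gives $GA(G)\ge GA(S_{r,k;3})$. The one remaining case is $d_{G}(z)<d_{G}(v)$ together with $T_{z}$ not a star. Here I would spend a pendant vertex of $v$ (one exists since $d_{G}(v)\ge 3$) and exploit the fact that $T_{z}$ has depth at least $2$ to reroute a bounded piece of $G$ into a new edge, producing a unicyclic graph of girth $4$ in which the maximal-degree cycle vertex has two cycle-neighbours of degree $2$ and a star pendant tree, and whose GA index is at most $GA(G)$; Proposition~\ref{prop:case-deg2} then yields $GA(G)\ge GA(S_{p,q;4})$ for suitable $p,q$.

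The main obstacle is precisely this last case. Both obvious moves — relocating all of $T_{z}$ onto pendant edges at $v$, or starring $T_{z}$ at $z$ — can strictly \emph{raise} the contribution of one cycle edge (the edge $uz$, respectively the edge $zv$), so the surgery that manufactures the second degree-$2$ cycle vertex must be chosen carefully and the resulting inequality verified by hand, weighing the (large) gain on the affected pendant edges against the (small) possible loss on a cycle edge. Engineering that surgery and checking the inequality is where I expect the real work to lie.
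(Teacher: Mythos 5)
Your reduction to the triangle case is sound, and in fact cleaner than the paper's own first step: the monotonicity of the cycle degrees is argued correctly, and your $\operatorname{arc}(w_1\,e\,v)$-transformation targets $v$, so the hypotheses of Proposition \ref{prop:arctrans} (local maximum target with star pendant tree) are genuinely satisfied. The two easy subcases ($T_z$ a star, giving $S_{r,k;3}$ outright; $d(z)=d(v)$, handled by Proposition \ref{prop:maxdeg-tree}) are also fine. But the entire content of the proposition sits in the case you leave open, namely $3\le d(z)<d(v)$ with $T_z$ not a star. There you only gesture at ``rerouting a bounded piece of $G$'' to manufacture a girth-$4$ graph to which Proposition \ref{prop:case-deg2} would apply: no surgery is specified, no inequality is checked, and you say explicitly that this is where you expect the real work to lie. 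That is a genuine gap, not a finishing detail — the two ``obvious moves'' you correctly rule out (relocating all of $T_z$ to $v$, which needs $z$ to be a local minimum as in Proposition \ref{prop:mindeg}, or starring $T_z$ at $z$, which needs $z$ to be a local maximum) show that none of the earlier propositions covers this configuration, so an ad hoc construction with its own edge-by-edge verification is required and is missing.

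For comparison, the paper (whose $v_t$ plays the role of your $z$) closes this case by splitting on whether some neighbour $w$ of $z$ inside $T_z$ satisfies $d(w)>d(z)$. If no such $w$ exists, it relocates only the edges of $T_z$ \emph{not incident to} $z$ onto $v$: this keeps $d(z)$ fixed, so the cycle edges $uz$ and $zv$ incur no loss (the edge $zv$ even gains since $d(v)$ grows), and an explicit comparison of the ratios $rd$ on every affected edge shows the resulting graph, which is $S_{r,k;3}$, has smaller GA index — note this branch terminates at $S_{r,k;3}$, not at $S_{p,q;4}$ as your sketch assumes. If such a $w$ exists, the paper shifts the cycle edge $zu$ to $wu$ (so the girth becomes $4$), relocates $E(T_z)\setminus\big(E(T_w)\cup\{wz\}\big)$ to pendant edges at $v$, and stars $T_w$ at $w$, arriving at $S_{p,q;4}$ directly, again by an explicit ratio comparison for the edges $wu$, $wz$, $vz$, $vu$ and the relocated tree edges rather than by an appeal to Proposition \ref{prop:case-deg2}. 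Supplying a construction of this kind, together with its verification, is exactly what your proposal lacks; the idea of keeping $d(z)$ fixed while moving the deeper layers of $T_z$ (rather than all of $T_z$ or none of it) is the missing ingredient in the first subcase, and the edge shift $zu\to wu$ is the missing ingredient in the second.
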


\begin{proof}
    Let $u$ be the neighbor cycle vertex of $v$ with degree 2.
    Assume that the remaining cycle vertices are $v_1,\ldots,v_t$ in the counterclockwise order, where $t\geq 1$.
    If $t=1$, then we put $G'=G$.
    Assume that $t \geq 2$.
    Since there is no local minimum vertex other than $u$, we have
    \begin{equation*}
        d_G(v_i) \leq d_G(v_{i+1}) ~~\text{for each $i=1,\ldots t-1$.}
    \end{equation*}
    Hence the maximal degree cycle vertex in $\{v_1,\ldots,v_t\}$ is $v_t$.
    Let $e=uv_1$. Then we can reduce the girth of the graph to 3 by the $\operatorname{arc}(uev_t)$-transformation.
    We denote this graph by $G'$. Now, we consider the following two cases:
    \begin{itemize}
        \item [1.] Every neighbor vertex $w$ of $v_t$ in $V_{G'}(T_{v_t})$ satisfies $d_{G'}(w) \leq d_{G'}(v_t)$.
        \item [2.] There exist a neighbor vertex $w$ of $v_t$ in $V_{G'}(T_{v_t})$ such that $d_{G'}(w) > d_{G'}(v_t)$.
    \end{itemize}

    \textbf{Case 1.}
    Suppose that every neighbor vertex $w$ of $v_t$ in $V_{G'}(T_{v_t})$ satisfies $d_{G'}(w) \leq d_{G'}(v_t)$.
    We relocate all the edges not adjacent to $v_t$ in $E_{G'}(T_{v_t})$ to pendant edges at $v$.
    Then $G'$ is transformed into $S_{r,k;3}$ for some non-negative integers $r$ and $k$ (see Figure \ref{fig:case-deg1-1}).
    \begin{figure}[b!t!h!]
        \centering
        \includegraphics[width=8.5cm]{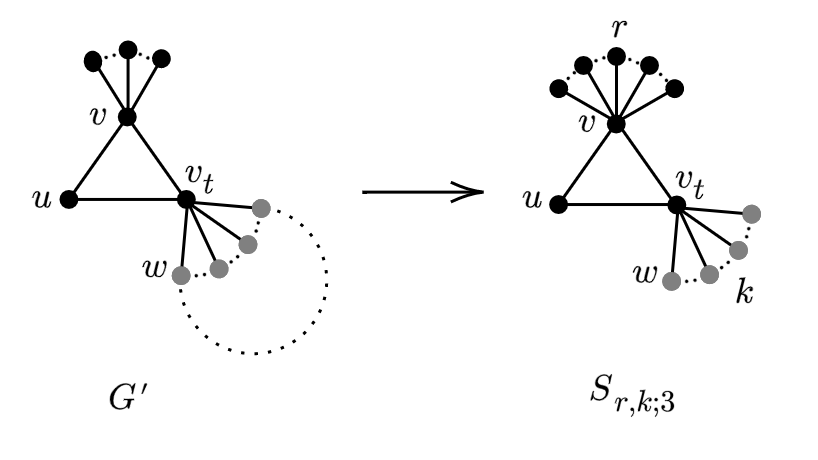}
        \caption{Transformation of the graph $G'$ to $S_{r,k;3}$}
        \label{fig:case-deg1-1}
    \end{figure}

    Now, we compare the contributions of the edges of $G'$ and $S_{r,k;3}$.
    Let $e$ be a pendant edge in $E_{G'}(T_v)$.
    Since $d_{G'}(v) \leq d_{S_{r,k;3}}(v)$,
    \begin{equation*}
        rd_{G'}(e)=\frac{d_{G'}(v)}{1} \leq \frac{d_{S_{r,k;3}}(v)}{1}=rd_{S_{r,k;3}}(e).
    \end{equation*}
    Let $e$ be an edge not incident to $v_t$ in $E_{G'}(T_{v_t})$.
    Since
    \begin{align*}
        \lvert E_{S_{r,k;3}}(T_v) \rvert &=
        \lvert E_{G'}(T_v) \rvert +
        \lvert E_{G'}(T_{v_t}) \rvert -(d_{G'}(v_t)-2)\\
        & \geq \lvert E_{G'}(T_{v_t}) \rvert -(d_{G'}(v_t)-2),
    \end{align*}
    we have
    \begin{equation*}
        rd_{G'}(e) \leq \frac{\lvert E_{G'}(T_{v_t}) \rvert -(d_{G'}(v_t)-2)+1}{1} \leq \frac{\lvert
        E_{S_{r,k;3}}(T_v) \rvert+2}{1}=rd_{S_{r,k;3}}(e).
    \end{equation*}
    Let $e$ be an edge incident to $v_t$ in $E_{G'}(T_{v_t})$.
    Then
    \begin{equation*}
        rd_{G'}(e) \leq \frac{d_{G'}(v_t)}{1}= \frac{d_{S_{r,k;3}}(v_t)}{1}=rd_{S_{r,k;3}}(e).
    \end{equation*}
    For the two cycle edges incident to $v$,
    since the degree of $v$ is increased and the degrees of the other cycle vertices are fixed, the contribution of the edges is decreased.
    Thus $$GA(G) \leq GA(G') \leq GA(S_{r,k;3})$$ for some non-negative integers $r$ and $k$.

    \textbf{Case 2.} Suppose that $w$ is a neighbor vertex of $v_t$ in $V_{G'}(T_{v_t})$ such that $d_{G'}(w) > d_{G'}(v_t)$.
    Let $T_w \subset T_{v_t}$ be a maximal tree rooted at $w$ that does not contain the edge $wv_t$.
    First, we shift the edge $v_tu$ to the edge $wu$.
    Then the girth of resulting graph is 4.
    We relocate all the edges in $E_{G'}(T_{v_t})\backslash (E_{G'}(T_w) \cup \{wv_t\})$ to pendant edges at $v$ and then we rearrange the edges in $E_{G'}(T_w)$ to pendant edges at $w$.
    Then we obtain the graph $S_{p,q;4}$ for some positive integers $p$ and $q$ (see Figure \ref{fig:case-deg1-2}).
    \begin{figure}[h!t!b!]
     \centering
     \includegraphics[width=13cm]{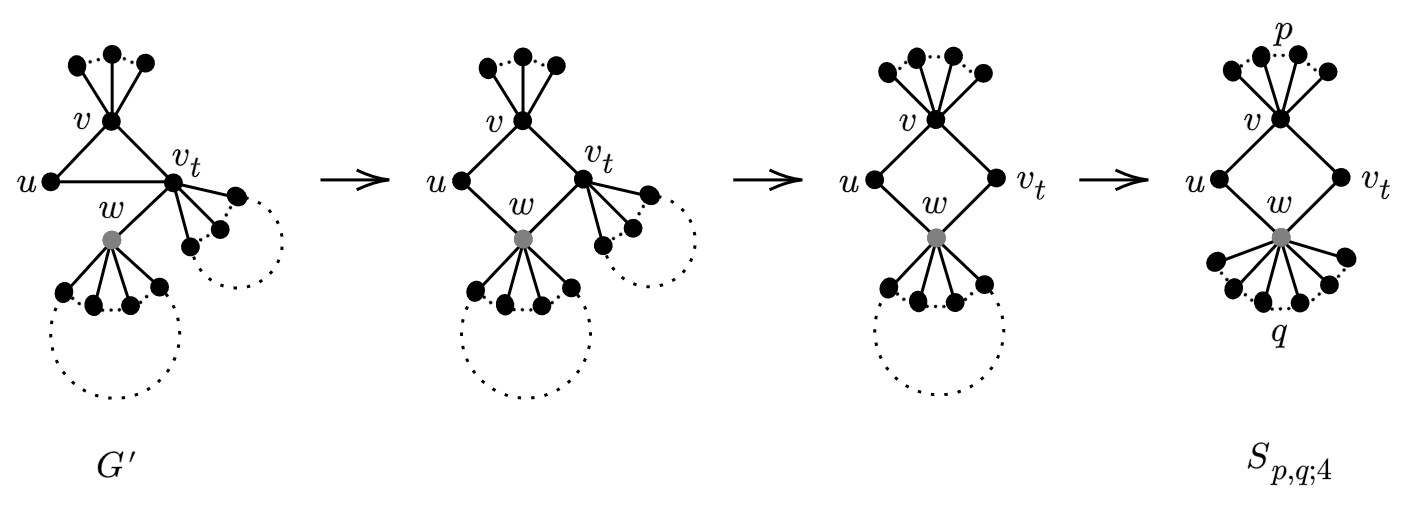}
     \caption{Transformation of the graph $G'$ to $S_{p,q;4}$}
     \label{fig:case-deg1-2}
    \end{figure}

    We compare the GA index of $G'$ and $S_{p,q;4}$.
    Since $d_{G'}(v) \leq d_{S_{p,q;4}}(v)$, the contribution of the edge in $E_{G'}(T_v)$ decreases.
    Let $e$ be an edge in $E_{G'}(T_{v_t})\backslash (E_{G'}(T_w) \cup \{wv_t\})$. Since
    \begin{align*}
         \lvert E_{S_{p,q;4}}(T_v) \rvert &=
        \lvert E_{G'}(T_v) \rvert +
        \lvert E_{G'}(T_{v_t}) \rvert -( \lvert E_{G'}(T_w)  \rvert +1)\\
        & > \lvert E_{G'}(T_{v_t}) \rvert -( \lvert E_{G'}(T_w)  \rvert +1),
    \end{align*}
    it follows that
    \begin{equation*}
        rd_{G'}(e) \leq \frac{\lvert E_{G'}(T_{v_t}) \rvert -( \lvert E_{G'}(T_w)  \rvert +1)+3}{1} \leq \frac{\lvert E_{S_{p,q;4}}(T_v) \rvert+2}{1}=rd_{S_{p,q;4}}(e).
    \end{equation*}
    Let $e$ be an edge in  $E_{G'}(T_w)$.
    Then
    \begin{equation*}
        rd_{G'}(e) \leq \frac{\rvert E_{G'}(T_w)\lvert  +1}{1} \leq \frac{\rvert E_{S_{p,q;4}}(T_w) \lvert+2}{1} =rd_{S_{p,q;4}}(e)
    \end{equation*}
    because $\rvert E_{G'}(T_w)\lvert=\rvert E_{S_{p,q;4}}(T_w) \lvert$.
    We compare the ratios of the endvertex degrees of the edges $v_tu$ and $wu$.
    Since $d_{G'}(v_t) < d_{G'}(w) \leq d_{S_{p,q;4}}(w)$, we have
    \begin{equation*}
        rd_{G'}(v_tu)=\frac{d_{G'}(v_t)}{2} < \frac{d_{S_{p,q;4}}(w)}{2} =rd_{S_{p,q;4}}(wu).
    \end{equation*}
    For the edge $wv_t$,
    since $ d_{G'}(w) \leq d_{S_{p,q;4}}(w)$ and $d_{G'}(v_t) \geq d_{S_{p,q;4}}(v_t)=2$, we obtain
    \begin{equation*}
        rd_{G'}(wv_t) =\frac{d_{G'}(w)}{d_{G'}(v_t)} \leq \frac{d_{S_{p,q;4}}(w)}{2}=rd_{S_{p,q;4}}(wv_t).
    \end{equation*}
    For the edge $vv_t$,
    since $d_{G'}(v) \leq d_{S_{p,q;4}}(v)$ and $d_{G'}(v_t) >d_{S_{p,q;4}}(v_t)=2$, we have
    \begin{equation*}
        rd_{G'}(vv_t)=\frac{d_{G'}(v)}{d_{G'}(v_t)} \leq \frac{d_{S_{p,q;4}}(v)}{2} =rd_{S_{p,q;4}}(vv_t).
    \end{equation*}
    For the edge $vu$, the degree of $v$ is increased and the degree of $u$ is fixed.
    Hence the contribution of $vu$ decreases.
    Thus $$GA(G) \leq GA(G') \leq GA(S_{p,q;4})$$ for some non-negative integers $p$ and $q$.
\end{proof}

In the following propositions, we will prove that the GA index of $S_{n;3}$ is less than the GA index of $S_{p,q;4}$ and the GA index of  $S_{r,k;3}$ for some non-negative integers $p$, $q$, $r$ and $k$.

\begin{proposition}\label{prop:GA_case1}
 Let $n$, $p$ and $q$ be non-negative integers with $n=p+q+4$. Then
 $GA(S_{n,3}) < GA(S_{p,q;4})$
 for $n \geq 5$.
\end{proposition}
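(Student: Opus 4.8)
The plan is to transform $S_{p,q;4}$ directly into $S_{n;3}$ by one explicit operation and then control the transformation edge by edge. Since $S_{p,q;4}\cong S_{q,p;4}$ we may assume $p\ge q$. Write $G=S_{p,q;4}$, let $a$ and $b$ be the two pendant-bearing cycle vertices (so $d_G(a)=p+2$ and $d_G(b)=q+2$), and let $c,d$ be the remaining two cycle vertices, each of degree $2$, with $c$ and $d$ both adjacent to $a$ and to $b$. Let $G'$ be obtained from $G$ by deleting the cycle edge $bd$, adding the edge $ab$, and relocating the $q$ pendant edges at $b$ to pendant edges at $a$. Checking degrees and incidences: $\{a,b,c\}$ now spans a triangle, $b$ and $c$ still have degree $2$, $d$ has become a pendant neighbour of $a$, and $a$ has the $p+q+1=n-3$ pendant neighbours consisting of $d$ together with the $p$ retained and $q$ relocated leaves, so $d_{G'}(a)=n-1$; hence $G'=S_{n;3}$. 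I would state this identification carefully first.

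Next I would pair the edges of $G$ and $G'$: $ac\leftrightarrow ac$, $ad\leftrightarrow ad$, $bc\leftrightarrow bc$, the deleted edge $bd\leftrightarrow$ the added edge $ab$, each pendant at $a$ in $G$ with the corresponding pendant at $a$ in $G'$, and each pendant at $b$ in $G$ with a relocated pendant at $a$ in $G'$. For every pair $e\leftrightarrow e'$ other than $bc$ one has $rd_{G'}(e')\ge rd_G(e)$: the ratio of a pendant edge is the degree of its non-leaf end, which rises from $d_G(a)=p+2$ or $d_G(b)=q+2$ to $d_{G'}(a)=p+q+3$; on $ac$ the ratio rises from $\tfrac{p+2}{2}$ to $\tfrac{p+q+3}{2}$; on $ad$ it rises from $\tfrac{p+2}{2}$ to $p+q+3$ (the end $d$ becomes a leaf); and from $bd$ to $ab$ it rises from $\tfrac{q+2}{2}$ to $\tfrac{p+q+3}{2}$. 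Hence $GA_G(e)\ge GA_{G'}(e')$ by Lemma \ref{lem:ga-e}. The sole exception is $bc$, whose ratio drops from $\tfrac{q+2}{2}$ to $1$, so its contribution rises by $1-\frac{2\sqrt{2(q+2)}}{q+4}\in[0,1)$. Summing over all edges, discarding the non-negative differences coming from $ac$ and from the $p$ pendants of $a$, and using that the difference coming from $ad$ is strictly positive (indeed $\frac{2\sqrt{2(p+2)}}{p+4}>\frac{2\sqrt{p+3}}{p+4}\ge\frac{2\sqrt{p+q+3}}{p+q+4}$), I obtain
\begin{equation*}
GA(S_{p,q;4})-GA(S_{n;3})\ >\ \left(\frac{2\sqrt{2(q+2)}}{q+4}-1\right)+\left(\frac{2\sqrt{2(q+2)}}{q+4}-\frac{2\sqrt{2(p+q+3)}}{p+q+5}\right)+q\left(\frac{2\sqrt{q+2}}{q+3}-\frac{2\sqrt{p+q+3}}{p+q+4}\right).
\end{equation*}

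It remains to check the right-hand side is non-negative. Since $p\ge q$ gives $p+q+3\ge 2q+3$, and since $t\mapsto\frac{2\sqrt{2t}}{t+2}$ and $t\mapsto\frac{2\sqrt t}{t+1}$ are decreasing on $[2,\infty)$ and $[1,\infty)$ respectively, the displayed bound is at least $\psi(q)-1$, where
\begin{equation*}
\psi(q):=\frac{4\sqrt{2(q+2)}}{q+4}-\frac{2\sqrt{2(2q+3)}}{2q+5}+q\left(\frac{2\sqrt{q+2}}{q+3}-\frac{2\sqrt{2q+3}}{2q+4}\right).
\end{equation*}
So the proposition reduces to the one-variable inequality $\psi(q)\ge 1$ for every integer $q\ge 0$, and this is the step I expect to be the main obstacle, because it is nearly tight: $\psi(0)=2-\frac{2\sqrt6}{5}$, which exceeds $1$ only by the slender margin $25>24$. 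I would handle it by evaluating $\psi$ at the first several integers directly and, for $q$ large, showing that the single term $q\big(\frac{2\sqrt{q+2}}{q+3}-\frac{2\sqrt{2q+3}}{2q+4}\big)$ already exceeds $1$; its positivity is equivalent to $4(q+2)^3>(q+3)^2(2q+3)$, i.e. to $2q^3+9q^2+12q+5>0$, and a crude lower bound for the gap turns this into the required growth. Assembling the three displays then yields $GA(S_{n;3})<GA(S_{p,q;4})$ for all admissible $p,q$ with $n=p+q+4\ge 5$.
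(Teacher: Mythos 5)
Your reduction is correct and it is a genuinely different route from the paper's. The paper expands both indices explicitly, writes $GA(S_{p,q;4})-GA(S_{n;3})=A(p,q)+B(p,q)-1$ with $f(x)=\tfrac{2\sqrt x}{x+1}$, $g(x)=\tfrac{2\sqrt{2x}}{x+2}$, and then shows $A\geq 1$ or $B\geq 1$ by a two-variable analysis: a partial-derivative/polynomial computation to get monotonicity of $A$ in $p$, tables of numerical values for small $p,q$, and the mean value theorem on the diagonal $A(q,q)$. You instead reuse the paper's own cut-and-paste machinery (Lemma \ref{lem:ga-e}) once more: a single explicit transformation carries $S_{p,q;4}$ to $S_{n;3}$, every paired edge except $bc$ has its ratio increase, and after discarding the favorable $ac$, $ad$ and $a$-pendant differences and using $p\geq q$ the whole problem collapses to a one-variable inequality $\psi(q)\geq 1$. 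I checked your bookkeeping: the six families of paired edges sum exactly to $GA(S_{p,q;4})-GA(S_{n;3})$, the strictness supplied by the $ad$ pair is legitimate, and the substitution $p+q+3\geq 2q+3$ is a valid monotone lower bound. This is arguably cleaner than the paper's computation and more in the spirit of its earlier propositions.

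The gap is the final step: $\psi(q)\geq 1$ is asserted with a plan, not proved, and the plan is quantitatively tighter than you suggest. The single term $q\bigl(\tfrac{2\sqrt{q+2}}{q+3}-\tfrac{2\sqrt{2q+3}}{2q+4}\bigr)$ first exceeds $1$ at $q=8$ (at $q=7$ it is about $0.993$), so ``the first several integers'' must mean checking $q=0,\dots,7$ explicitly; and the positivity of $2q^3+9q^2+12q+5$ gives only positivity of that term, not the growth you need. To make the large-$q$ case rigorous you need something like the rationalized identity
\begin{equation*}
q\left(\frac{2\sqrt{q+2}}{q+3}-\frac{2\sqrt{2q+3}}{2q+4}\right)
=\frac{2q\,(2q^3+9q^2+12q+5)}{(q+3)(2q+4)\bigl(\sqrt{q+2}\,(2q+4)+\sqrt{2q+3}\,(q+3)\bigr)},
\end{equation*}
which grows like a constant times $\sqrt q$, or the paper's MVT trick, which gives the lower bound $\tfrac{q(q+1)^2}{2(q+2)^2\sqrt{2q+3}}$ but beats $1$ only from about $q\geq 13$, enlarging the finite check accordingly. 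The inequality itself is true (the tightest case is $q=0$, where $\psi(0)=2-\tfrac{2\sqrt 6}{5}\approx 1.0202$), so the gap is fillable with a finite list of evaluations plus one elementary estimate; but as written the decisive one-variable inequality, which you yourself identify as the main obstacle, is not established.
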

\begin{proof}
  We may assume that $p\geq q \geq 0$.
  Note that $$GA(S_{p,q;4})=p\frac{2\sqrt{p+2}}{p+3}+2\frac{2\sqrt{2}\sqrt{p+2}}{p+4}+2\frac{2\sqrt{2}\sqrt{q+2}}{q+4}+q\frac{2\sqrt{q+2}}{q+3}$$
  and
  $$GA(S_{n,3})=(n-3)\frac{2\sqrt{n-1}}{n}+2\frac{2\sqrt{2}\sqrt{n-1}}{n+1}+1.$$
  Let $f(x)=\frac{2\sqrt{x}}{x+1}$ and $g(x)=\frac{2\sqrt{2}\sqrt{x}}{x+2}$.
  Then $f$ is decreasing on $[1,\infty)$, $g$ is decreasing on $[2,\infty)$ and $g(x)>f(x)$ on $[2,\infty)$.
  It follows that
  \begin{align*}
      GA(S_{p,q;4})-GA(S_{n,3})=&\, p(f(p+2)-f(p+q+3))+q(f(q+2)-f(p+q+3))\\
      &\,+2g(p+2)+2g(q+2)-2g(p+q+3)-f(p+q+3)\\
      &\,-1.
  \end{align*}
  We need to show that $GA(S_{p,q;4})-GA(S_{n,3})>0$.
  Let
  \begin{equation*}
      A(p,q)=p(f(p+2)-f(p+q+3))+q(f(q+2)-f(p+q+3))
  \end{equation*}
  and
  \begin{equation*}
      B(p,q)=2g(p+2)+2g(q+2)-2g(p+q+3)-f(p+q+3).
  \end{equation*}
  It is easy to check that $A(p,q)>0$ and $B(p,q)>0$.
  Thus it is enough to show that  $A(p,q) \geq 1$ or $B(p,q) \geq 1$.
    First, we observe that $GA(S_{n,3})< GA(S_{p,q;4})$ for $q=0,1$.
  Since
  \begin{align*}
      B(p,0)=&\,2g(p+2)+2g(2)-2g(p+3)-f(p+3)\\
      =&\,2(g(p+2)-g(p+3))+(1-f(p+3))+1 >1
  \end{align*}
  and
  \begin{align*}
      B(p,1)=&\,2g(p+2)+2g(3)-2g(p+4)-f(p+4)\\
      =&\, 2(g(p+2)-g(p+4))+2g(3)-f(p+4)\\
      >&\,2g(3)-f(5) \approx 1.2142,
  \end{align*}
  we have $GA(S_{n,3})< GA(S_{p,q;4})$ for $q=0,1$.
  For a fixed $q \geq 2$, we show that $A(p,q)$ is an increasing function with respect to $p$.
  The partial derivative of $A(p,q)$ with respect to $p$ is
  \begin{align}\label{eq:part-1}
     \frac{\partial}{\partial p} A(p,q)= &\,\frac{p^2+9p+12}{(p+3)^2\sqrt{p+2}}-\frac{(p+q)^2+12(p+q)+24}{(p+q+4)^2\sqrt{p+q+3}} \nonumber\\
      \geq&\,\frac{p^2+9p+12}{(p+3)^2\sqrt{p+2}}-\frac{(p+2)^2+12(p+2)+24}{(p+6)^2\sqrt{p+5}}.
  \end{align}
 By reducing to a common denominator, the numerator becomes
 \begin{equation*}
     p^8 + 77p^7 + 1837p^6 + 21007p^5 + 133366p^4 + 492168p^3 + 1036800p^2 + 1135728p + 495072.
 \end{equation*}
Hence $\frac{\partial}{\partial p} A(p,q)>0$ for a fixed $q \geq 2$.
\begin{table}[t]
  \begin{center}
    \begin{tabular}{ |c||c|c||c|c||c|c| }
    \hline
    $p$&$A(p,2)$ & $B(p,2)$&$A(p,3)$ & $B(p,3)$& $A(p,4)$& $B(p,4)$ \\
    \hline
    2 &\cellcolor{black!20}  0.5543& \cellcolor{black!20} 1.4468 &- &- &-&-\\
    3& \cellcolor{black!20} 0.6934&\cellcolor{black!20} 1.4641 & \cellcolor{black!20} 0.8721 &\cellcolor{black!20}1.4713&-&- \\
    4& \cellcolor{black!20} 0.7994&\cellcolor{black!20} 1.4749 & 1.0108&1.4834 &1.1767&1.4681\\
    5& \cellcolor{black!20} 0.8825&\cellcolor{black!20} 1.4829 & 1.1211& 1.4740&1.3102&1.4624\\
    6& \cellcolor{black!20} 0.9491 &\cellcolor{black!20} 1.4896 & 1.2109&1.4744 &1.4199&1.4572\\
    7& 1.0036 & 1.4957 &1.2853 & 1.4750&1.5116&1.4531\\
    \hline
    \end{tabular}
    \vspace{0.5cm}
  \caption{Values of $A(p,q)$ and $B(p,q)$}
  \label{tab:AB-1}
  \end{center}
\end{table}
Since $A(7,2)>1$ and $A(p,2)$ is an increasing function with respect to $p$, we obtain $A(p,2)>1$ for $p \geq 7$. If $p<7$, then $B(p,2)>1$ (see Table \ref{tab:AB-1}).
Hence $GA(S_{n,3}) < GA(S_{p,q;4})$ for $p \geq q=2$.
Similarly, $GA(S_{n,3}) < GA(S_{p,q;4})$ for $p \geq q=3$.
Since $A(p,q)$ is an increasing function with respect to $p$, in order to prove $A(p,q)>1$ for $p \geq q \geq 4$, it is enough to show that  $A(q,q)>1$ for $q \geq 4$.
From Table \ref{tab:AB-1}, we see that $A(4,4)>1$.
Now, we suppose that $p=q \geq 5$.
We note that $$A(q,q)=2q(f(q+2)-f(2q+3)).$$
By the mean value theorem, there exists $c \in (q+2, 2q+3)$ such that $$f(2q+3)-f(q+2)=f'(c)(q+1).$$
Hence $A(q,q)=-2q(q+1)f'(c)$.
Since $f'$ is a negative increasing function on $[2,\infty)$, we have
  \begin{align*}
      A(q,q)=&\,-2q(q+1)f'(c)\\
      >&\,-2q(q+1)f'(2q+3)=\frac{q(q+1)^2}{(q+2)^2\sqrt{2q+3}}\\
      \geq &\, \frac{5(5+1)^2}{(5+2)^2\sqrt{2\cdot 5+3}} \approx 1.0188.
  \end{align*}
  Thus $GA(S_{n,3})< GA(S_{p,q;4})$ for $p \geq q \geq 4$.
\end{proof}

\begin{proposition}
 Let $n,r$ and $k$ be positive integers with $n=r+k+3$. Then
 $GA(S_{n,3})< GA(S_{r,k;3})$ for $r,k \geq 1$.
\end{proposition}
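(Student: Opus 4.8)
The plan is to compute both indices explicitly, form the difference $D(r,k):=GA(S_{r,k;3})-GA(S_{n,3})$, and show $D(r,k)>0$ with the same ingredients used in the proof of Proposition~\ref{prop:GA_case1}. By the symmetry $S_{r,k;3}\cong S_{k,r;3}$ we may assume $r\ge k\ge 1$. Writing $f(x)=\frac{2\sqrt{x}}{x+1}$ and $g(x)=\frac{2\sqrt{2}\sqrt{x}}{x+2}$ as in that proof, and abbreviating $h=\frac{2\sqrt{(r+2)(k+2)}}{r+k+4}$ for the contribution of the edge joining the two branch vertices, one has
\begin{align*}
GA(S_{r,k;3})&=rf(r+2)+kf(k+2)+g(r+2)+g(k+2)+h,\\
GA(S_{n,3})&=(r+k)f(r+k+2)+2g(r+k+2)+1,
\end{align*}
so that $D(r,k)=A(r,k)+B(r,k)-1$ with $A(r,k)=r\bigl(f(r+2)-f(r+k+2)\bigr)+k\bigl(f(k+2)-f(r+k+2)\bigr)$ and $B(r,k)=g(r+2)+g(k+2)-2g(r+k+2)+h$. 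Since $f$ is decreasing on $[1,\infty)$ and $r,k\ge 1$, both summands of $A(r,k)$ are positive, so $A(r,k)>0$.

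The key reduction is the bound $B(r,k)>g(k+2)$. Because $g$ is decreasing on $[2,\infty)$ we have $g(r+2)>g(r+k+2)$, and because $(r+2)(k+2)=rk+2(r+k+2)>2(r+k+2)$ we get $h>\frac{2\sqrt{2}\sqrt{r+k+2}}{r+k+4}=g(r+k+2)$; adding the last two inequalities gives $g(r+2)+h>2g(r+k+2)$, hence $B(r,k)>g(k+2)$. Therefore $D(r,k)>A(r,k)+g(k+2)-1$, and it suffices to prove that $A(r,k)+g(k+2)\ge 1$ for all integers $r\ge k\ge 1$.

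The third step is to show that $r\mapsto A(r,k)$ is increasing on $[k,\infty)$ for each fixed $k$. A routine computation gives $\frac{\partial}{\partial r}A(r,k)=\psi(r)-\psi(r+k)$, where $\psi(m)=\frac{m^{2}+9m+12}{(m+3)^{2}\sqrt{m+2}}$, and the logarithmic derivative of $\psi$ simplifies to $\frac{\psi'(m)}{\psi(m)}=-\frac{m^{3}+18m^{2}+45m+24}{2(m+2)(m+3)(m^{2}+9m+12)}<0$ for $m\ge 0$; hence $\psi$ is strictly decreasing and $\psi(r)>\psi(r+k)$. Consequently $A(r,k)\ge A(k,k)=2k\bigl(f(k+2)-f(2k+2)\bigr)$, so it remains only to verify $A(k,k)+g(k+2)\ge 1$ for every integer $k\ge 1$. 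For $k\le 5$ this is a direct numerical check (the values of $A(k,k)+g(k+2)$ being roughly $1.11$, $1.34$, $1.61$, $1.86$, $2.12$). For $k\ge 6$, the mean value theorem supplies $c\in(k+2,2k+2)$ with $A(k,k)=-2k^{2}f'(c)$; since $f'$ is negative and increasing on $[2,\infty)$ (as used in the proof of Proposition~\ref{prop:GA_case1}), $A(k,k)>-2k^{2}f'(2k+2)=\lambda(k)$, where $\lambda(k)=\frac{2k^{2}(2k+1)}{\sqrt{2k+2}(2k+3)^{2}}$, and $\frac{\lambda'(k)}{\lambda(k)}=\frac{6}{k(2k+3)}+\frac{2k+3}{(2k+1)(2k+2)}>0$ shows $\lambda$ is increasing, so $\lambda(k)\ge\lambda(6)=\frac{936}{225\sqrt{14}}>1$. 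Thus $A(k,k)+g(k+2)>1$ for $k\ge 6$ as well, which finishes the proof.

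The main obstacle is that the estimates carry almost no slack near the boundary $r=k$ for small $k$: when $(r,k)=(1,1)$ the margin $A(r,k)+g(k+2)-1$ is only about $0.11$, so the bounds $B(r,k)>g(k+2)$ and $A(r,k)\ge A(k,k)$ must both be applied essentially at equality, and one must take care not to discard a needed positive contribution along the way. Once the two monotonicity facts (for $\psi$ and for $\lambda$) are in hand, everything collapses to the finite check for $1\le k\le 5$; verifying those two monotonicities, i.e.\ the two displayed polynomial sign conditions, is the only genuinely computational ingredient.
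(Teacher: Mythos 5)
Your proof is correct, and although it starts from the same decomposition as the paper---writing $GA(S_{r,k;3})-GA(S_{n,3})=C(r,k)+D(r,k)-1$ in the paper's notation (your $A$, $B$), using monotonicity of $C$ in $r$ and a mean-value-theorem estimate on the diagonal $r=k$---it diverges at the key step. The paper argues ``$C\geq 1$ or $D\geq 1$,'' and for $k\in\{2,3,4\}$ this requires a case split over $r$ supported by the numerical values in Table \ref{tab:b} ($D(r,k)>1$ for small $r$, $C(r,k)>1$ past a threshold where monotonicity takes over). You instead prove the uniform bound $D(r,k)>g(k+2)$ by noting that the cross-edge term satisfies $\frac{2\sqrt{(r+2)(k+2)}}{r+k+4}>g(r+k+2)$ (equivalent to $rk>0$) and that $g(r+2)>g(r+k+2)$; combined with $C(r,k)\geq C(k,k)$ this reduces the whole problem to the five diagonal checks $C(k,k)+g(k+2)\geq 1$ for $1\leq k\leq 5$ (which I verified: roughly $1.11$, $1.34$, $1.61$, $1.86$, $2.12$) plus the same MVT tail for $k\geq 6$. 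This buys a shorter, table-free argument, and you also explicitly verify the two monotonicity facts (your $\psi$ decreasing and $\lambda$ increasing, both computations correct) that the paper only asserts or handles numerically; the paper's route, by contrast, is more direct bookkeeping with explicit values of $C$ and $D$. One inherited quibble: like the paper, you cite ``$f'$ negative and increasing on $[2,\infty)$,'' which is literally true only for $x\geq 1+2/\sqrt{3}\approx 2.15$; since the mean value theorem is applied with arguments at least $k+2\geq 8$, this is harmless in both proofs.
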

\begin{proof}
  We may assume that $r \geq k\geq 1$.
  Note that
  \begin{equation*}
      GA(S_{r,k;3})=r\frac{2\sqrt{r+2}}{r+3}+\frac{2\sqrt{2}\sqrt{r+2}}{r+4}+\frac{2\sqrt{2}\sqrt{k+2}}{k+4} +k\frac{2\sqrt{k+2}}{k+3}+\frac{2\sqrt{r+2}\sqrt{k+2}}{r+k+4}
  \end{equation*}
  and
  \begin{equation*}
      GA(S_{n,3})=(n-3)\frac{2\sqrt{n-1}}{n}+2\frac{2\sqrt{2}\sqrt{n-1}}{n+1}+1.
  \end{equation*}
  Let $f(x)=\frac{2\sqrt{x}}{x+1}$ and $g(x)=\frac{2\sqrt{2}\sqrt{x}}{x+2}$ which is defined in Proposition \ref{prop:GA_case1}.
  Then
  \begin{align*}
      GA(S_{r,k;4})-GA(S_{n,3})=&\,r(f(r+2)-f(r+k+2))+k(f(k+2)-f(r+k+2))\\
      &\,+g(r+2)+g(k+2)-2g(r+k+2)+\frac{2\sqrt{r+2}\sqrt{k+2}}{r+k+4}\\
      &\,-1.
  \end{align*}
  Let
  $$C(r,k)=r(f(r+2)-f(r+k+2))+k(f(k+2)-f(r+k+2))$$ and $$D(r,k)=g(r+2)+g(k+2)-2g(r+k+2)+\frac{2\sqrt{r+2}\sqrt{k+2}}{r+k+4}.$$
  Then $C(r,k) >0$ and $D(r,k)>0$.
  We show that $C(r,k) \geq 1$ or $D(r,k) \geq 1$.
\begin{table}[t]
  \begin{center}
    \begin{tabular}{ |c||c|c||c|c||c|c||c|c| }
    \hline
    $r$&$C(r,2)$ & $D(r,2)$&$C(r,3)$ & $D(r,3)$& $C(r,4)$ & $D(r,4)$& $C(r,5)$& $D(r,5)$ \\
    \hline
    2 & \cellcolor{black!20}0.4006 & \cellcolor{black!20}1.1536  &- &- &- &- &-&-\\
    3&\cellcolor{black!20}0.5289 &\cellcolor{black!20}1.1772 &\cellcolor{black!20}0.7009 &\cellcolor{black!20}1.2070 &- &- &-&-\\
    4&\cellcolor{black!20}0.6282 &\cellcolor{black!20}1.1886 &\cellcolor{black!20}0.8355 &\cellcolor{black!20}1.2226&\cellcolor{black!20}0.9992 &\cellcolor{black!20}1.2413 &-&- \\
    5&\cellcolor{black!20}0.7072 &\cellcolor{black!20}1.1936 &\cellcolor{black!20}0.9436 &\cellcolor{black!20}1.2303 &1.1317 &1.2513 &1.2850&1.2633 \\
    6&\cellcolor{black!20}0.7716 &\cellcolor{black!20}1.1949 &1.0324 &1.2333 &1.2413 &1.2561 &1.4126&1.2695\\
    7&\cellcolor{black!20}0.8251 &\cellcolor{black!20}1.1941 &1.1067 &1.2335 &1.3336 &1.2575 &1.5205&1.2721 \\
    8&\cellcolor{black!20}0.8703 &\cellcolor{black!20}1.1920 &1.1699 &1.2319 &1.4124&1.2568 &1.6133&1.2724\\
    9&\cellcolor{black!20}0.9091 &\cellcolor{black!20}1.1891 &1.2244 &1.2293 &1.4808 &1.2546 &1.6939&1.2710\\
    10&\cellcolor{black!20}0.9427 &\cellcolor{black!20}1.1858 &1.2719 &1.2259 &1.5406 &1.2516 &1.7647&1.2685\\
    11&\cellcolor{black!20}0.9723 &\cellcolor{black!20}1.1823 &1.3137 &1.2221 &1.5934 &1.2480 & 1.8276&1.2653\\
    12&\cellcolor{black!20}0.9984 &\cellcolor{black!20}1.1786 &1.3509 &1.2181 &1.6406 &1.2440 &1.8837&1.2616\\
    13&1.0218 &1.1750 &1.3842 &1.2139 & 1.6829&1.2397 &1.9343&1.2575\\
    \hline
    \end{tabular}
    \vspace{0.5cm}
  \caption{Values of $C(r,k)$ and $D(r,k)$}
  \label{tab:b}
  \end{center}
\end{table}
Since the partial derivative $C(r,k)$ with respect to $r$ is
\begin{equation*}
    \frac{\partial}{\partial r}C(r,k)= \frac{r^2+9r+12}{(r+3)^2\sqrt{r+2}}- \frac{(r+k)^2+9(r+k)+12}{(r+k+3)^2\sqrt{r+k+2}}>0,
\end{equation*}
$C(r,k)$ is an increasing function with respect to $r$ for a fixed $k\geq 1$.
We observe that $GA(S_{n;3}) < GA(S_{p,q;4})$ for $r \geq k =1$. Since
\begin{align*}
D(r,1)=&\,g(r+2)+g(3)-2g(r+3)+\frac{2\sqrt{3}\sqrt{r+2}}{r+5}\\
    >&\, \frac{2\sqrt{6}}{5}-\frac{2\sqrt{2}\sqrt{r+3}}{r+5}+\frac{2\sqrt{3}\sqrt{r+2}}{r+5}\\
    >&\,\frac{2\sqrt{6}}{5} \approx 0.9798
\end{align*}
and $C(r,1) \geq C(1,1) \approx 0.1321$, we have $C(r,1)+D(r,1)>1$.
Hence  $GA(S_{n;3}) < GA(S_{p,q;4})$ for $r \geq k =1$.
Since $C(13,2)>1$ and $C(r,2)$ is an increasing function with respect to $r$, we obtain $C(r,2)>1$ for $r \geq 13$.
If $r<13$, then $D(r,2)>1$ (see Table \ref{tab:b}).
Hence $GA(S_{n;3}) < GA(S_{p,q;4})$ for $r \geq k =2$.
Similarly, $GA(S_{n;3}) < GA(S_{p,q;4})$ for $r \geq k =3$ or $4$.
For $k \geq 5$,
it is enough to show that  $C(k,k)>1$.
From Table \ref{tab:b}, we see that $C(5,5)\approx 1.2850$.
Now, we show that
\begin{equation*}
    C(k,k)=2k(f(k+2)-f(2k+2))>1 ~~\text{for $k \geq 6$}.
\end{equation*}
By mean value theorem, there exists $c\in(k+2,2k+2)$ such that $$f(2k+2)-f(k+2)=f'(c)k.$$
Since $f'$ is a negative increasing function on $[2,\infty)$, we have
\begin{align*}
    C(k,k)=&\,-2k^2f'(c)\\
    >&-2k^2f'(2k+2) =2k^2\frac{2k+1}{(2k+3)^2\sqrt{2k+2}}\\
    \geq &\,\,2\cdot 6^2\frac{13}{15^2\sqrt{14}} \approx 1.1118.
\end{align*}
Hence $GA(S_{n;3}) < GA(S_{p,q;4})$ for $r \geq k =6$.

\end{proof}

By the previous results we have the following theorem.
\begin{theorem}
Let $G$ be a unicyclic graph with $n$ vertices. Then
\begin{equation*}
     GA(S_{n,3})\leq GA(G) \leq GA(C_n),
\end{equation*}
where
\begin{equation*}
    GA(S_{n,3})=1+\frac{(2n^2+4(\sqrt{2}-1)n-6)\sqrt{n-1}}{n(n+1)}~~\text{and}~~GA(C_n)=n.
\end{equation*}
\end{theorem}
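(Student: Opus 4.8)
The plan is to treat the two inequalities separately, the upper one being essentially immediate and the lower one being an assembly of the preceding propositions. For the upper bound I would simply note that a connected unicyclic graph on $n$ vertices has exactly $n$ edges and that $GA_G(e)=f(rd_G(e))\le 1$ for every edge, with equality exactly when its two endvertex degrees agree; summing over all $n$ edges gives $GA(G)\le n$, and equality forces every edge to have equal endvertex degrees, which for a connected unicyclic graph means $G=C_n$. Since every edge of $C_n$ joins two vertices of degree $2$, $GA(C_n)=n$.

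For the lower bound I would first dispose of $n=3$ and $n=4$ by the direct computation already recorded (the relevant graphs being $C_3$, $C_4$ and the paw graph), and then assume $n\ge 5$. The heart of the argument is the reduction procedure of Steps 1--4: starting from an arbitrary unicyclic $G$, Step 1 (Corollary \ref{coro:maxdeg-tree}) replaces the pendant tree at a maximal-degree cycle vertex $v$ by a star centred at $v$; Step 2 (Proposition \ref{prop:mindeg}) transfers the pendant tree of a minimal-degree cycle vertex $u$ onto $v$, so that $d_G(u)=2$; Step 3 (an $\operatorname{arc}(uev)$-transformation, justified by Proposition \ref{prop:arctrans}) makes $u$ a neighbour of $v$; and Step 4 repeats this on a second local minimum vertex $\bar u$ when one exists. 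Each step leaves $GA$ non-increasing, so after the procedure $GA(G)$ has only decreased, and the resulting graph is one of the two configurations of Figure \ref{fig:case-nbd}: either $v$ has exactly one neighbour cycle vertex of degree $2$ with no further local minimum vertex, or $v$ has two such neighbours. In the first configuration the unnumbered proposition following the Remark gives $GA(G)\ge GA(S_{p,q;4})$ or $GA(G)\ge GA(S_{r,k;3})$; in the second, Proposition \ref{prop:case-deg2} gives $GA(G)\ge GA(S_{p,q;4})$ if the girth is at least $4$, while if the girth is $3$ then $G=S_{n;3}$ outright.

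To finish, I would apply Proposition \ref{prop:GA_case1} and the proposition following it, which give $GA(S_{n,3})<GA(S_{p,q;4})$ whenever $n=p+q+4$ and $GA(S_{n,3})<GA(S_{r,k;3})$ whenever $n=r+k+3$; chaining these with the reduction yields $GA(S_{n,3})\le GA(G)$ for every unicyclic $G$ on $n\ge 5$ vertices. The closed form is then a short computation: $S_{n;3}$ has $n-3$ pendant edges joining degrees $1$ and $n-1$ (total $(n-3)\frac{2\sqrt{n-1}}{n}$), two cycle edges joining the degree-$(n-1)$ vertex to the two degree-$2$ cycle vertices (total $\frac{4\sqrt2\sqrt{n-1}}{n+1}$), and one cycle edge between the two degree-$2$ vertices (contributing $1$); combining over the common denominator $n(n+1)$ gives $GA(S_{n,3})=1+\frac{(2n^2+4(\sqrt2-1)n-6)\sqrt{n-1}}{n(n+1)}$.

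I expect the genuine difficulty to lie not in any of the displayed inequalities — each of those is already proved in a cited proposition — but in the bookkeeping of the reduction: one must check that Steps 1--4 can always be carried out in order, that they terminate in exactly one of the two configurations of Figure \ref{fig:case-nbd}, and, crucially, that the hypotheses of Propositions \ref{prop:mindeg}, \ref{prop:arctrans} and \ref{prop:case-deg2} actually hold at the moment each is invoked (that $T_v$ is still a star with $v$ of maximal cycle degree, and that the monotonicity $d_G(u)\le d_G(w)\le d_G(v)$ holds along the arc used in each $\operatorname{arc}$-transformation). Once this case analysis is in place, the theorem follows by assembling the cited results together with the two small-$n$ verifications and the final computation.
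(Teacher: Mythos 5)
Your proposal is correct and follows essentially the same route as the paper: the upper bound from $GA_G(e)\le 1$ over the $n$ edges (equality only for $C_n$), and the lower bound by assembling the reduction steps of Corollary \ref{coro:maxdeg-tree} and Propositions \ref{prop:mindeg}, \ref{prop:arctrans}, \ref{prop:case-deg2} and the following one, then comparing with $S_{p,q;4}$ and $S_{r,k;3}$ via Proposition \ref{prop:GA_case1} and its companion, plus the small-$n$ checks and the direct computation of $GA(S_{n;3})$. This is exactly what the paper means by ``By the previous results we have the following theorem,'' so no further comment is needed.
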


\bibliographystyle{plain} 

\end{document}